\documentclass{amsart}
\usepackage{amsmath}
\usepackage{amsthm}
\usepackage{amssymb}
\usepackage{xcolor}
\usepackage{enumerate}
\usepackage[colorlinks=true,linkcolor=blue,citecolor=blue,urlcolor=blue]{hyperref}
\usepackage{ulem}

\newtheorem{thm}{Theorem}
\newtheorem{lem}[thm]{Lemma}

\newtheorem{ass}[thm]{Assumption}
\newtheorem{defi}[thm]{Definition}
\newtheorem{prop}[thm]{Proposition}
\newtheorem{rk}[thm]{Remark}

\newcommand{\rr}{{\mathbb{R}}}
\newcommand{\WW}{{\mathbb{W}}}
\newcommand{\cF}{{\mathcal{F}}}
\newcommand{\cW}{{\mathcal{W}}}
\newcommand{\cP}{{\mathcal{P}}}
\newcommand{\cA}{{\mathcal{A}}}

\newcommand{\cC}{{\mathcal{C}}}
\newcommand{\cX}{{\mathcal{X}}}
\newcommand{\cM}{{\mathcal{M}}}
\newcommand{\cE}{{\mathcal{E}}}
\newcommand{\e}{\varepsilon}
\newcommand{\vip}{\vskip.12cm}
\newcommand{\indiq}{{\bf 1}}
\newcommand{\E}{\mathbb{E}}
\newcommand{\PP}{\mathbb{P}}
\newcommand{\bX}{\bar X}
\newcommand{\bU}{\bar U}
\newcommand{\bP}{\bar \PP}
\newcommand{\bE}{\bar \E}
\newcommand{\bOmega}{\bar \Omega}

\newcommand{\bcF}{\bar \cF}
\newcommand{\tX}{\tilde X}
\newcommand{\tY}{\tilde Y}
\newcommand{\tU}{\tilde U}
\newcommand{\vmark}{\xi}
\newcommand{\nmark}{\vmark'}
\newcommand{\emark}{\vmark^\dagger}
\newcommand{\pmark}{a}
\newcommand{\qmark}{b}
\newcommand{\rmark}{c}
\newcommand{\vmarkspace}{E}
\newcommand{\vmarkfield}{\cE}
\newcommand{\vmarklaw}{\pi}
\newcommand{\emarkspace}{\vmarkspace^\dagger}
\newcommand{\emarkfield}{\vmarkfield^\dagger}
\newcommand{\emarklaw}{\vmarklaw^\dagger}
\newcommand{\dd}{{\mathrm{d}}}

\begin{document}

\title[Mean-field theory via dissociated arrays]
{Mean-field theory via dissociated arrays for particle systems interacting through noisy weights}
\author{Nicolas Fournier}
\author{Datong Zhou}
\address{Nicolas Fournier, Sorbonne Universit\'e - LPSM (UMR 8001),
Campus Pierre et Marie Curie, Case courrier 158, 4 place Jussieu,
75252 Paris Cedex 05, France. \tt{nicolas.fournier@sorbonne-universite.fr}}
\address{Datong Zhou, Sorbonne Universit\'e - LPSM (UMR 8001) and LJLL (UMR 7598),
Campus Pierre et Marie Curie, Case courrier 158, 4 place Jussieu,
75252 Paris Cedex 05, France. \tt{datong.zhou@sorbonne-universite.fr}}

\begin{abstract}
We study a mean-field limit for a $N$-particle system in which each particle
follows a diffusion and interacts with other particles through a weight on each directed edge. Each weight evolves according to
its own nonlinear SDE driven by a Brownian motion, with coefficients involving the states of the two endpoint particles of the edge. The initial vertex and edge variables are assumed to have a dissociated Aldous--Hoover form. We construct the limiting nonlinear SDE by averaging the interaction over an independent neighbor and an edge input, prove its well-posedness, and show that the
dissociated vertex-edge structure is propagated by the dynamics. This
propagation property is an analogue of propagation of chaos in the case where the weight of each edge may remain correlated with the states of the two endpoint particles. Under either a bounded-observable assumption or a sub-Gaussian edge-input condition, the finite system converges to this limit through quantitative coupling estimates for a
typical particle and a typical edge. We also prove the convergence of the empirical
measure of particle's state pairs and their interaction weights.
\end{abstract}

\subjclass[2020]{60K35, 60H10, 49Q22}

\keywords{Interacting particle systems, Dissociated arrays, Mean-field limits, McKean-Vlasov equations}

\maketitle

\section{Description of the model and main result}

\subsection{Motivation}

In its classical exchangeable form, mean-field theory describes the
large-population limit of an interacting particle system through one typical particle, of which the state solves a nonlinear SDE, see the seminal works of Kac~\cite{kac1956foundations}, McKean~\cite{mckean1969propagation}, Sznitman~\cite{sznitman1991topics}, Méléard~\cite{mele}.
\vip
A first way to incorporate heterogeneity is to keep the interaction structure
fixed: the interaction kernel is prescribed in advance.
Graphon and graph-kernel mean-field limits describe such system, see for instance
Chiba and Medvedev~\cite{chiba2018mean}, Kaliuzhnyi-Verbovetskyi and Medvedev~\cite{kaliuzhnyi2018mean}, Bayraktar, Chakraborty and Wu~\cite{bayraktar2023graphon} and Jabin and co-authors~\cite{jabin2025mean,jabin2023mean,jabin2024non}.
In all these works, the interaction weights remain fixed along the dynamics.
\vip
However, in many adaptive network models, the interaction strength between
two particles is itself a dynamical variable, depending on the states of the two
particles and possibly driven by its own noise. Such feedback is natural in
co-evolutionary network models and in synaptic plasticity, see Gross and Blasius~\cite{gross2008adaptive} and Bi and Poo~\cite{bi2001synaptic}.
\vip
Mathematical works on genuine co-evolutionary dynamics include deterministic
continuum limits for pairwise adaptive networks, see Gkogkas, Kuehn and Xu~\cite{gkogkas2023continuum,gkogkas2025mean}, as well as probabilistic models
with intrinsic stochasticity and dynamically evolving edge variables, see Bayraktar and Wu~\cite{bayraktar2021mean} and Ganguly, Spiliopoulos and Sussman~\cite{ganguly2025mean}. The companion work
\cite{zhou2025non} develops a structural metric and sampling viewpoint for
linear weight adaptation.
\vip
The goal of the present paper is to treat a class of particle systems with
fully noisy and nonlinear real-valued edge weights. We keep track not only of a one-particle law, but of a vertex-edge array whose dissociated structure has to be propagated by the
dynamics.

\subsection{The interacting particle system}
We fix $d\geq 1$ and $m\geq 1$. For each $N\geq 2$, we set $I_N=\{1,\dots,N\}$ and consider the system
\begin{align}
X^{i,N}_t=&X^i_0+\int_0^t b(X^{i,N}_s)\dd s + \int_0^t \sigma(X^{i,N}_s)\dd B^i_s
+ \frac1{N-1} \sum_{j\in I_N,j\neq i} \int_0^t\phi(U^{ij,N}_s)\gamma(X^{i,N}_s,X^{j,N}_s) \dd s, \label{s1}\\
U^{ij,N}_t=&U^{ij}_0 +\int_0^t \alpha(X^{i,N}_s,X^{j,N}_s,U^{ij,N}_s)\dd s + \int_0^t
\beta(X^{i,N}_s,X^{j,N}_s,U^{ij,N}_s)\dd W^{ij}_s. \label{s2}
\end{align}
The first equation~\eqref{s1} has to hold for all $i \in I_N$, the second one~\eqref{s2} for all
$i,j\in I_N$ with $i\neq j$.

\subsection{Hypotheses}
Concerning randomness, we suppose the following.

\begin{ass}\label{asr}
The following random objects are independent:
\vip
\noindent $\bullet$ an i.i.d. family $(\vmark_i)_{i\geq 1}$ with common law $\vmarklaw$
valued in some measurable space $(\vmarkspace,\vmarkfield)$,
\vip
\noindent $\bullet$ an i.i.d. family $(\emark_{ij})_{i,j\geq 1, i\neq j}$ with common law $\emarklaw$
valued in some measurable space $(\emarkspace,\emarkfield)$,
\vip
\noindent $\bullet$ some i.i.d. $m$-dimensional Brownian motions $(B^i_t)_{t\geq 0, i\geq 1}$,
\vip
\noindent $\bullet$ some i.i.d. $1$-dimensional Brownian motions $(W^{ij}_t)_{t\geq 0, i,j\geq 1, i\neq j}$.
\vip
There are some measurable functions $F:\vmarkspace\to \rr^d$ and
$G:\vmarkspace^2\times{\emarkspace}\to \rr$ such that, for all $i\geq 1$,
$X_0^i=F(\vmark_i)$ and, for all $i,j\geq 1$ with $i\neq j$,
$U_0^{ij}=G(\vmark_i,\vmark_j,\emark_{ij})$.
\end{ass}

This is the standard dissociated Aldous--Hoover form for the
initial edge weights: each edge $ij$ has its own mark $\emark_{ij}$, independent of the vertex variables $\xi_i$ and $\xi_j$ and of the other edge marks. For 
the representation-theoretic background, see Aldous~\cite{aldous1981representations}, Hoover~\cite{hoover1979relations} and Kallenberg\cite{kallenberg1989representation,kallenberg2005probabilistic};
for the connection with graph limits, see Janson and Diaconis~\cite{janson2008graph}. In the
present nonlinear setting, the edge mark is not merely a technical decoration:
it records edge-level randomness which can be seen by nonlinear functions of
the weight and which later evolves together with the edge Brownian motion.

\vip
We will assume at least the following moment condition.
\begin{ass}\label{asm}
For $\vmark_1,\vmark_2,\emark_{12}$ independent with $\vmark_1,\vmark_2\sim \vmarklaw$ and $\emark_{12}\sim\emarklaw$, it holds that 
$$
\E[|F(\vmark_1)|^2+(G(\vmark_1,\vmark_2,\emark_{12}))^2]<\infty.
$$
\end{ass}

Concerning the coefficients, we assume the following.

\begin{ass}\label{asc}
The functions $b:\rr^d\to \rr^d$, $\sigma:\rr^d \to \cM_{d\times m}(\rr)$,
$\gamma:\rr^d\times\rr^d\to \rr^d$,
$\alpha: \rr^d\times\rr^d\times \rr \to \rr$ and $\beta: \rr^d\times\rr^d\times \rr \to \rr$
are bounded and globally Lipschitz continuous, while $\phi:\rr\to\rr$ is globally Lipschitz continuous.
\end{ass}

We keep $\phi$ merely Lipschitz at this stage, in order to include the natural
choice $\phi(u)=u$. Additional boundedness or moment assumptions are stated
explicitly in the results below.

\begin{rk}
Under Assumptions~\ref{asr}, \ref{asm} and~\ref{asc},
the system \eqref{s1}-\eqref{s2} has a pathwise unique strong solution,
because it can be written as an SDE in $\rr^{Nd+N(N-1)}$ with locally Lipschitz continuous coefficients
with at most linear growth.
\end{rk}

\subsection{The mean-field limit}

We aim to study what happens as $N$ tends to infinity.
The mean-field limit is described by a nonlinear SDE coupled with an averaged
edge interaction.
To rigorously define a limit process, we need the following result, that will be proved in Section~\ref{smes}
using some arguments found in Karandikar~\cite{k}. It provides a measurable
solution map for the edge equation.

\begin{lem}\label{mes}
Grant Assumptions~\ref{asr}, \ref{asm} and~\ref{asc} and set $\cC_\ell=C(\rr_+,\rr^\ell)$. There is a measurable map
$$
\Gamma=(\Gamma_t)_{t\geq 0} : \vmarkspace \times \cC_d\times \vmarkspace \times \cC_d\times\emarkspace\times\cC_1 \to \cC_1
$$
such that for any space $(\Omega,\cF,(\cF_t)_{t\ge 0},\PP)$, any triple $(\vmark,\nmark,\emark)$ of independent $\cF_0$-measurable variables with $\vmark,\nmark\sim\vmarklaw$ and $\emark\sim\emarklaw$,
any pair of continuous $(\cF_t)_{t\geq 0}$-adapted
$\rr^d$-valued processes $X=(X_t)_{t\geq 0}$, $Y=(Y_t)_{t\geq 0}$ such that
\begin{equation}\label{condA}
\text{$\forall\; T>0, \; \exists \; A_T>0$ such that $\forall \;0\leq s<t<s+1\leq T+1$,}\quad \E[|X_t-X_s|^2]\leq
A_T (t-s),
\end{equation}
and such that $Y$ satisfies the same condition,
any $1$-dimensional $(\cF_t)_{t\geq 0}$-Brownian motion $W=(W_t)_{t\geq 0}$,
the pathwise unique solution $U=(U_t)_{t\geq 0}$ to
\begin{equation}\label{dfg}
U_t=G(\vmark,\nmark,\emark)+ \int_0^t \alpha(X_s,Y_s,U_s)\dd s + \int_0^t \beta(X_s,Y_s,U_s)\dd W_s
\end{equation}
satisfies $U=\Gamma(\vmark,X,\nmark,Y,\emark,W)$ in the sense that a.s., for all $t\geq 0$,
$U_t=\Gamma_t(\vmark,X,\nmark,Y,\emark,W)$.
One may moreover build $\Gamma$ in such a way that
for all random sextuplet $(\vmark,X,\nmark,Y,\emark,W)$ as above, a.s.,
\begin{equation}\label{prog}
\text{for all $t\geq 0$,} \quad
\Gamma_t(\vmark,X,\nmark,Y,\emark,W)=\Gamma_t(\vmark,(X_{s\land t})_{s\geq 0},\nmark,
Y,\emark,W).
\end{equation}
\end{lem}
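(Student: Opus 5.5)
We build $\Gamma$ explicitly as an almost-sure limit of Picard iterates. Each iterate is given by Riemann sums along dyadic grids, following Karandikar's pathwise construction of stochastic integrals. Every object is then a Borel function of the deterministic input $(x,y,w)\in\cC_d\times\cC_d\times\cC_1$ and of the marks. For $u\in\cC_1$, $x,y\in\cC_d$, $w\in\cC_1$ and $k\geq 1$, we set
$$
I^k_t(x,y,u,w)=\sum_{\ell\geq 0}\beta\bigl(x_{\ell 2^{-k}},y_{\ell 2^{-k}},u_{\ell 2^{-k}}\bigr)\bigl(w_{(\ell+1)2^{-k}\land t}-w_{\ell 2^{-k}\land t}\bigr),
$$
and $J^k_t(x,y,u)=\int_0^t \alpha(x_{\lfloor 2^ks\rfloor 2^{-k}},y_{\lfloor 2^ks\rfloor 2^{-k}},u_{\lfloor 2^ks\rfloor 2^{-k}})\dd s$. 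Both are continuous in $t$ and jointly measurable, indeed continuous in the inputs. We then define $\Phi(x,y,u,w)=\lim_k (J^k+I^k)$ along the subsequence where the limit exists locally uniformly, and $0$ otherwise. The set where the limit exists is Borel, so $\Phi$ is Borel from $\cC_d^2\times\cC_1^2$ to $\cC_1$.

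The key probabilistic step is to show that for adapted inputs satisfying \eqref{condA} and $u$ adapted with a similar $L^2$ modulus, $J^k+I^k$ converges a.s. locally uniformly to $\int_0^\cdot\alpha(X_s,Y_s,u_s)\dd s+\int_0^\cdot\beta(X_s,Y_s,u_s)\dd W_s$. By Doob and Itô's isometry, with $\beta,\alpha$ Lipschitz,
$$
\E\Bigl[\sup_{t\le T}|I^k_t-\textstyle\int_0^t\beta(\cdot)\dd W|^2\Bigr]\le C\int_0^T \E\bigl[|X_s-X_{\lfloor 2^ks\rfloor2^{-k}}|^2+|Y_s-Y_{\lfloor 2^ks\rfloor 2^{-k}}|^2+|u_s-u_{\lfloor 2^ks\rfloor 2^{-k}}|^2\bigr]\dd s\le C_T2^{-k},
$$
using \eqref{condA}. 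This is summable in $k$, so Borel--Cantelli gives a.s. uniform convergence on compacts, and the same holds for $J^k$. This is exactly why \eqref{condA} is imposed. It lets us avoid Karandikar's random stopping-time grids and use deterministic dyadic ones. The Picard iterates $u^0_t=G(\vmark,\nmark,\emark)$ and $u^{n+1}=G+\Phi(x,y,u^n,w)$ all have the modulus $\E|u_t-u_s|^2\le C(t-s)$, because $\alpha,\beta$ are bounded. So at each stage the iterate coincides a.s. with the true Picard iterate $U^{n+1}=G+\int\alpha(X,Y,U^n)\dd s+\int\beta(X,Y,U^n)\dd W$.

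We then set $\Gamma(\vmark,x,\nmark,y,\emark,w)=\lim_n u^n$ when this limit exists locally uniformly, and $0$ otherwise. Again this is Borel. The classical Picard estimate gives $\E[\sup_{t\le T}|U^{n+1}_t-U^n_t|^2]\le (C_T)^n T^n/n!$. This is summable, so $U^n\to U$ a.s. locally uniformly, where $U$ is the pathwise unique solution of \eqref{dfg}. Hence $U=\Gamma(\vmark,X,\nmark,Y,\emark,W)$ a.s. Only square integrability of $G(\vmark,\nmark,\emark)$ is needed, and Assumption~\ref{asm} provides it. Independence of the marks only ensures that the law of $G(\vmark,\nmark,\emark)$ is the prescribed one. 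It is not used otherwise, since $G$ is $\cF_0$-measurable and that is all the argument requires.

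For \eqref{prog}, note that $I^k_t$ and $J^k_t$ depend only on inputs on $[0,t]$. Therefore each $u^n_t$, and hence $\Gamma_t$ where the limit exists, is a non-anticipating functional. The remaining difficulty, which I expect to be the main obstacle, is that the "limit exists" set is not itself non-anticipating. To fix this, I would redefine $\Gamma_t$ as the limit of $u^n_t$ computed from the stopped inputs $(x_{\cdot\land t},y_{\cdot\land t},w_{\cdot\land t})$, with existence required only on $[0,t]$. One then checks that this version still agrees a.s. with $U$. The reason is that the stopped processes remain adapted and satisfy \eqref{condA}, and the solution driven by stopped inputs agrees with $U$ on $[0,t]$. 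Taking rational $t$ first and then using continuity gives the statement for all $t$ simultaneously. Since only stopping of $X$ is required in \eqref{prog}, it suffices to stop $x$ alone.
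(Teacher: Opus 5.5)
Your proof of the representation $U=\Gamma(\vmark,X,\nmark,Y,\emark,W)$ is correct, but it takes a different route from the paper. The paper uses a single approximation layer: the Euler scheme $\Gamma^n$ with step $2^{-n}$. This is a continuous function of $(G(\pmark,\qmark,\rmark),x,y,w)$, and \eqref{condA} plus Gronwall give $\E[\sup_{[0,T]}|U-\Gamma^n(\vmark,X,\nmark,Y,\emark,W)|^2]\le C_T2^{-n}$. The paper then defines $\Gamma$ as the locally uniform limit where it exists. You first build a pathwise It\^o-integral map $\Phi$ from dyadic Riemann sums, with \eqref{condA} playing the same role of making the $L^2$ error summable. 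You then iterate Picard through $\Phi$, so $\Gamma$ is a double limit. The induction $u^n(\vmark,X,\ldots)=U^n$ a.s. is sound, since every true iterate is adapted and has the $L^2$ modulus. Your route gives a reusable pathwise integral, at the price of an extra layer, and that layer is what causes trouble in \eqref{prog}.

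Your claim that each $u^n_t$ is a non-anticipating functional is false. $\Phi$ returns $0$ unless the Riemann sums converge locally uniformly on all of $\rr_+$, and that event depends on the inputs after time $t$. So every Picard layer is affected, not just the outer limit. The paper avoids this because its approximants satisfy \eqref{prog2} for every input.

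Your repair $\Gamma_t(\pmark,x,\ldots):=\Gamma^\circ_t(\pmark,x_{\cdot\wedge t},\ldots)$ makes \eqref{prog} an identity, but it creates a new problem. The map $t\mapsto\Gamma^\circ_t(\pmark,x_{\cdot\wedge t},\ldots)$ can jump to $0$ whenever the stopped input falls outside the convergence set. So the new map need not take values in $\cC_1$, which the statement and Lemma~\ref{Lambda}(i) require. Your step ``rational $t$, then continuity'' presupposes exactly this missing continuity. Stopping $w$ would also leave the class where the representation is proved, since $W_{\cdot\wedge t}$ is not a Brownian motion.

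The fix is not to redefine anything. Keep $\Gamma^\circ$ and fix $t$. Since $X_{\cdot\wedge t}$ is admissible, $\Gamma^\circ(\vmark,X_{\cdot\wedge t},\nmark,Y,\emark,W)$ is a.s. the solution driven by $X_{\cdot\wedge t}$. By pathwise uniqueness this solution coincides with $U$ on $[0,t]$, so $\Gamma^\circ_t(\vmark,X_{\cdot\wedge t},\ldots)=U_t=\Gamma^\circ_t(\vmark,X,\ldots)$ a.s. This uses your own pathwise-uniqueness observation. It is also how the paper's Step~5 concludes, by applying Step~2 to the stopped input. This fixed-$t$ form is all that the adaptedness proof of Lemma~\ref{Lambda}(ii) uses.
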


We call a pair $(\vmark,X)$ admissible if it is defined on some
filtered probability space, if $\vmark\sim\vmarklaw$ is $\cF_0$-measurable, and if $X=(X_t)_{t\geq0}$ is continuous, adapted and satisfies~\eqref{condA}. A
probability measure $\mu$ on $\vmarkspace\times\cC_d$ is called admissible if it is
the law of an admissible pair.
\vip

The next lemma, to be proved in Section~\ref{smes}, introduces the averaged edge-interaction operator. Informally, this operator takes the edge contribution seen from an
admissible input $(\vmark,X)$, computed through the map $\Gamma$ from
Lemma~\ref{mes} and the nonlinearity $\phi$, and averages it over an independent
neighbor with law $\mu$ and over the fixed edge input law $\emarklaw\otimes\WW$,
where $\WW$ is the Wiener measure on $\cC_1$.

\begin{lem}\label{Lambda}
Grant Assumptions~\ref{asr}, \ref{asm} and~\ref{asc} and suppose either that $\phi$ is bounded or that
\begin{equation}\label{m2c}
\sup_{\pmark,\qmark\in \vmarkspace}\int_{\emarkspace}|G(\pmark,\qmark,\rmark)|^2\emarklaw(\dd \rmark)<\infty.
\end{equation}
For any admissible probability measure $\mu$ on $\vmarkspace\times \cC_d$, we use the
pointwise notation, for $t\geq 0$ and $(\pmark,x)\in \vmarkspace\times \cC_d$,
$$
\Lambda_\mu(t,\pmark,x)=\int_{\vmarkspace\times \cC_d} \int_{\emarkspace\times \cC_1}
\phi(\Gamma_t(\pmark,x,\qmark,y,\rmark,w))\gamma(x_t,y_t) \emarklaw(\dd \rmark)\WW(\dd w)
\mu(\dd \qmark,\dd y).
$$

(i) For each admissible pair $(\vmark,X)$, the map
$t\mapsto \Lambda_\mu(t,\vmark,X)$ is a.s. continuous.
\vip
(ii) If $(\vmark,X)$ is admissible with respect to $(\cF_t)_{t\ge 0}$, then
$(\Lambda_\mu(t,\vmark,X))_{t\geq 0}$ is $(\cF_t)_{t\geq 0}$-adapted.
\end{lem}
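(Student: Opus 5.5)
Both items will be obtained by writing $\Lambda_\mu(t,\vmark,X)$ as a conditional expectation in an enlarged probability space, and then using Lemma~\ref{mes} to identify $\Gamma_t$ with the solution of \eqref{dfg}.

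Fix an admissible pair $(\vmark,X)$ on $(\Omega,\cF,(\cF_t),\PP)$. Let $(\nmark,Y)$ be an admissible pair with law $\mu$, realized on its own filtered space. Let $(\emark,W)$ have law $\emarklaw\otimes\WW$. Work on the product space $\bar\Omega=\Omega\times\Omega'$, where $\Omega'$ carries $(\nmark,Y,\emark,W)$, with $\nmark,Y$ independent of $(\emark,W)$. Use the product filtration $\bcF_t=\cF_t\otimes\cF'_t$, where $\cF'_t$ is generated by the filtration of $(\nmark,Y)$, by $\emark$, and by $(W_s)_{s\le t}$. Then $(\vmark,X)$ and $(\nmark,Y)$ remain admissible for $(\bcF_t)$: condition \eqref{condA} concerns only laws. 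Also $W$ is an $(\bcF_t)$-Brownian motion, and $\vmark,\nmark,\emark$ are independent and $\bcF_0$-measurable.

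By Lemma~\ref{mes}, $U_t:=\Gamma_t(\vmark,X,\nmark,Y,\emark,W)$ is a.s. the continuous solution of \eqref{dfg}. Fubini then gives, for $\PP$-a.e. $\omega$ and all $t$,
$$\Lambda_\mu(t,\vmark(\omega),X(\omega))=\E'\big[\phi(U_t(\omega,\cdot))\gamma(X_t(\omega),Y_t)\big].$$
This holds once integrability is checked. If $\phi$ is bounded it is trivial. Under \eqref{m2c}, use $|\phi(u)|\le C(1+|u|)$ together with the standard SDE estimate $\E[\sup_{s\le T}U_s^2\mid \vmark,\nmark,\emark]\le C_T(1+G^2)$, which holds because $\alpha,\beta$ are bounded. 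Averaging over $\emark$ and using \eqref{m2c} gives a bound $\sup_{a,b}\int\E'[\sup_{s\le T}U_s^2]\,\dd\emarklaw\le C_T$, uniform in $(\vmark,X)$. A subtle point is that the a.s. identification in Lemma~\ref{mes} holds $\bar\PP$-a.s. Fubini transfers this to $\PP'$-a.s. for $\PP$-a.e. $\omega$, which is all that is needed.

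For (i), fix $\omega$ in this full-measure set. Then $t\mapsto \phi(U_t)\gamma(X_t(\omega),Y_t)$ is continuous $\PP'$-a.s., because $U$ and $Y$ are continuous. It is dominated on $[0,T]$ by $\|\gamma\|_\infty C(1+\sup_{s\le T}|U_s|)$, which is $\PP'$-integrable for a.e. $\omega$ by the estimate above. Dominated convergence then yields continuity in $t$. I expect this integrability/domination step, in the unbounded-$\phi$ case, to be the main technical point. It is where \eqref{m2c} is used: it guarantees the domination for every $\omega$ rather than just on average.

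For (ii), use property \eqref{prog}: $\Gamma_t(\vmark,X,\nmark,Y,\emark,W)=\Gamma_t(\vmark,X_{\cdot\wedge t},\nmark,Y,\emark,W)$. Hence, a.s.,
$$\Lambda_\mu(t,\vmark,X)=\Psi_t(\vmark,X_{\cdot\wedge t}),\qquad \Psi_t(a,x):=\Lambda_\mu(t,a,x).$$
The map $\Psi_t$ is measurable on $\vmarkspace\times\cC_d$ by Fubini, since $\Gamma_t$ is jointly measurable. Moreover $(\vmark,X_{\cdot\wedge t})$ is $\cF_t$-measurable, because $X$ is continuous and adapted. This equality holds a.s. for each fixed $t$, so $\Lambda_\mu(t,\vmark,X)$ is $\cF_t$-measurable up to a null set. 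Completing the filtration if needed, this is adaptedness.
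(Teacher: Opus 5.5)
Your proposal is correct and follows essentially the same route as the paper. You write $\Lambda_\mu(t,\vmark,X)$ as an expectation over independent auxiliary copies of $(\nmark,Y)$, $\emark$ and $W$, and dominate $\sup_{t\le T}|\phi(\Gamma_t)|$ via the edge-SDE bound (the paper's Proposition~\ref{estgamma}-(i)) together with the linear growth of $\phi$ and \eqref{m2c}; you then get (i) by dominated convergence and (ii) from the non-anticipativity property \eqref{prog}, and your explicit treatment of the Fubini transfer and of the null set in (ii) makes precise points the paper leaves implicit.
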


For $(\xi,X)$ an admissible pair, we use the 
shorthand
$$
\Lambda_{\vmark,X}(t,a,x):=\Lambda_{\mathrm{Law}(\vmark,X)}(t,a,x).
$$
In particular, $\Lambda_{\vmark,X}(t,\vmark,X)$ means that the law in the operator is
$\mathrm{Law}(\vmark,X)$ and that the operator is evaluated at
$(\vmark,X)$.
We now introduce the limiting nonlinear SDE corresponding to~\eqref{s1}-\eqref{s2}.

\begin{defi}
Grant Assumptions~\ref{asr}, \ref{asm} and~\ref{asc} and suppose either that $\phi$ is bounded or that~\eqref{m2c} holds.
We say that $(\vmark,X)$ defined on some $(\Omega,\cF,(\cF_t)_{t\ge 0},\PP)$
solves the nonlinear SDE if  $\vmark\sim \vmarklaw$ is $\cF_0$-measurable, if $X=(X_t)_{t\geq 0}$
is continuous, $\rr^d$-valued, $(\cF_t)_{t\geq 0}$-adapted and satisfies~\eqref{condA}, and
if there is a $m$-dimensional $(\cF_t)_{t\geq 0}$-Brownian motion
$B=(B_t)_{t\geq 0}$ such that a.s., for all $t\geq 0$,
\begin{equation}
X_t=F(\vmark)+ \int_0^t b(X_s)\dd s + \int_0^t \sigma(X_s)\dd B_s + \int_0^t \Lambda_{\vmark,X}(s,\vmark,X)\dd s.
\end{equation}
\end{defi}

In words, $X$ is the limiting trajectory of a typical particle. Its dynamics
contains the local drift $b$, the Brownian noise $\sigma dB$, and the averaged
interaction field $\Lambda_{\vmark,X}$, which is obtained by averaging over an
independent typical neighbor, together with an additional independent edge
variable and an independent edge Brownian motion.

\subsection{Results}

Our first main result establishes well-posedness for the limiting equation before
any finite-$N$ comparison is made. The averaged operator $\Lambda_{\vmark,X}$
depends on the law of the candidate process itself, so the statement is a McKean--Vlasov well-posedness result for the vertex component.

\begin{thm}\label{mr} Grant Assumptions~\ref{asr}, \ref{asm} and~\ref{asc}
and suppose that either $\phi$ is bounded or~\eqref{m2c}.
\vip
(i) For any stochastic basis $(\Omega,\cF,(\cF_t)_{t\geq 0},\PP)$, any
$\cF_0$-measurable random variable $\vmark$ with law $\vmarklaw$, and any
$m$-dimensional $(\cF_t)_{t\geq 0}$-Brownian motion $B$, there exists a
pathwise unique continuous adapted process $X$ such that $(\vmark,X)$ solves the
nonlinear SDE driven by $B$.
\vip
(ii) Uniqueness in law holds: if $(\vmark,X)$ and $(\tilde \vmark,\tilde X)$ are two
solutions, possibly defined on different stochastic bases, then
$\mathrm{Law}(\vmark,X)=\mathrm{Law}(\tilde \vmark,\tilde X)$.
\end{thm}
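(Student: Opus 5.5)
Both parts will follow from a stability estimate for the map $\mu\mapsto\Lambda_\mu$, combined with a Picard iteration in the spirit of Sznitman. On a fixed basis with $\vmark$ and $B$ given, we define a map $\Phi$ on admissible processes $X$. Writing $\mu=\mathrm{Law}(\vmark,X)$, we let $\Phi(X)$ be the solution $Z$ of
$$Z_t=F(\vmark)+\int_0^t b(Z_s)\dd s+\int_0^t\sigma(Z_s)\dd B_s+\int_0^t\Lambda_\mu(s,\vmark,Z)\dd s.$$
This is the nonlinear SDE with the law frozen. It depends on $Z$ through its path, via $\Gamma$, but by \eqref{prog} only through $(Z_{s\wedge t})_{s\ge0}$, so it is a path-dependent SDE. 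Lemma~\ref{Lambda} supplies adaptedness and continuity of the drift. A fixed point of $\Phi$ is exactly a solution.

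\textbf{Key estimate.} Let $(\vmark,X)$, $(\vmark,Y)$ be admissible on the same space, and let $\mu,\nu$ be admissible laws. We aim to show that
$$\E\big[|\Lambda_\mu(t,\vmark,X)-\Lambda_\nu(t,\vmark,Y)|^2\big]\le C_T\Big(\E\big[\sup_{s\le t}|X_s-Y_s|^2\big]+\cW_t(\mu,\nu)^2\Big),$$
where $\cW_t(\mu,\nu)^2=\inf\E[\indiq_{\qmark\ne\qmark'}+\sup_{s\le t}|y_s-y'_s|^2\wedge\cdots]$ is taken over couplings sharing the same mark. For laws of the form $\mathrm{Law}(\vmark,X)$ and $\mathrm{Law}(\vmark,X')$ on the same space, we simply use the synchronous coupling. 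The estimate comes from realising both sides as expectations over an independent copy $(\vmark',X',Y')$ together with an independent edge input $(\emark,W)$. We then solve \eqref{dfg} twice, with inputs $(X,X')$ and $(Y,Y')$, using the same $G(\vmark,\vmark',\emark)$ and the same $W$, and apply Lemma~\ref{mes} to identify the solutions with $\Gamma$. Itô's formula plus Gronwall for the two $U$'s, using the Lipschitz property of $\alpha,\beta$, gives
$$\E\big[|U_t-U'_t|^2\big]\le C_T\,\E\Big[\int_0^t\big(|X_s-Y_s|^2+|X'_s-Y'_s|^2\big)\dd s\Big].$$
We then split
$$\phi(U)\gamma(X,X')-\phi(U')\gamma(Y,Y')=\big(\phi(U)-\phi(U')\big)\gamma+\phi(U')\big(\gamma(X,X')-\gamma(Y,Y')\big).$$
The first term is controlled because $\gamma$ is bounded and $\phi$ is Lipschitz. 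The second term needs $\phi(U')$ to be bounded in $L^2$ conditionally on $(\vmark,X)$, since the Lipschitz difference of $\gamma$ involves $X$. This is exactly where the alternative hypothesis enters: it is trivial if $\phi$ is bounded, and under \eqref{m2c} it follows from conditional second moments of $U'$. Because $\alpha,\beta$ are bounded, these conditional moments are bounded uniformly in $\vmark,\vmark'$.

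I expect this conditional-moment and Jensen step to be the main obstacle, because we average only over the neighbour and not over $\vmark$. To handle it, I would use the bound $|\gamma(X,X')-\gamma(Y,Y')|\le \min\big(2\|\gamma\|_\infty,\,L(|X-Y|+|X'-Y'|)\big)$ and Cauchy--Schwarz inside the conditional expectation given $(\vmark,X,Y)$. The uniform bound on $\E[\phi(U')^2\mid \vmark,\vmark']$ then gives the required control.

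\textbf{Existence and uniqueness.} With the estimate in hand, the frozen SDE is well posed by a standard Picard argument: the drift is bounded-by-linear-growth Lipschitz in the path sup-norm, and the Lipschitz bound from the key estimate with $\mu=\nu$ suffices. Its solution satisfies \eqref{condA} because $b,\sigma$ are bounded and $\E[\Lambda^2]$ is bounded, so $\Phi$ preserves admissibility. For two inputs $X,X'$, Burkholder--Davis--Gundy and Gronwall give
$$\E\Big[\sup_{s\le t}|\Phi(X)_s-\Phi(X')_s|^2\Big]\le C_T\int_0^t\E\Big[\sup_{r\le s}|X_r-X'_r|^2\Big]\dd s,$$
and iterating this yields a Cauchy sequence on $[0,T]$ for every $T$. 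The limit is a fixed point, which proves existence in (i). Pathwise uniqueness follows from the same inequality applied to two solutions on the same basis with the same $\vmark$ and $B$, after which Gronwall closes the argument.

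For (ii), given solutions on different bases, we build on one basis the strong solution $\hat X$ driven by $\tilde\vmark,\tilde B$ with the law frozen at $\nu=\mathrm{Law}(\tilde\vmark,\tilde X)$. The frozen SDE has pathwise uniqueness, so Yamada--Watanabe gives $\mathrm{Law}(\tilde\vmark,\tilde X)=\mathrm{Law}(\tilde\vmark,\hat X)$. This reduces the problem to comparing the two frozen flows on one space with laws $\mu,\nu$. The key estimate with the synchronous coupling yields
$$\delta_t:=\E\Big[\sup_{s\le t}|X_s-\hat X_s|^2\Big]\le C\int_0^t\delta_s\,\dd s,$$
hence $\delta\equiv0$. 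Alternatively, one can observe that the solution from (i) is a measurable functional of $(\vmark,B)$, by construction through Picard iterates with $\Gamma$ measurable, and invoke a Yamada--Watanabe-type argument directly.
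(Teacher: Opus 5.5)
Your proof is correct, but it is organised differently from the paper's.

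\textbf{Existence and pathwise uniqueness.} You run a Sznitman-type two-level scheme. For each admissible law $\mu$ you first solve the path-dependent SDE with the law frozen (an inner Picard iteration). You then contract $\Phi$ in the law variable. This forces you to prove an ``off-diagonal'' estimate, in which the law argument ($\mathrm{Law}(\vmark,X)$ versus $\mathrm{Law}(\vmark,X')$) and the evaluation path ($Z$ versus $Z'$) vary separately.

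The paper uses a single Picard scheme \eqref{pic}: $X^{n+1}$ is given explicitly by integrals of $b(X^n)$, $\sigma(X^n)$ and $\Lambda_{\vmark,X^n}(s,\vmark,X^n)$. So no auxiliary SDE has to be solved, and only the ``diagonal'' estimate of Proposition~\ref{estlambda}-(ii), where law and evaluation point move together, is needed.

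Your off-diagonal bound comes from the same device as the paper's:
\begin{itemize}
\item an independent copy of $(\vmark,X,X')$ realising both laws synchronously;
\item Proposition~\ref{estgamma}-(ii);
\item the split of $\phi(U)\gamma-\phi(U')\gamma$;
\item the bound $\E^\dagger\E^\#[\phi(U')^2]\le C_T$, valid under \eqref{m2c} for every pair of admissible vertex paths.
\end{itemize}
So it costs nothing new beyond the extra layer. One point to state explicitly: applying $\Phi$ to the limit of the outer iteration requires that limit to be admissible. This holds because Proposition~\ref{estlambda}-(i) bounds $\Lambda_\mu$ uniformly in $\mu$, so the constant in \eqref{condA} for $\Phi(X^n)$ does not depend on $n$.

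\textbf{Uniqueness in law.} For (ii) the paper avoids Yamada--Watanabe. By induction, the law of $(\vmark,X^n,B)$ depends only on $\mathrm{Law}(\vmark,B)$. Pathwise uniqueness then identifies any solution with the Picard limit on its own basis; this is your ``alternatively'' remark.

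Your Yamada--Watanabe route also works. Regular conditional laws of the $\cC_d$-valued solution given $(\vmark,B)$ exist even though $\vmarkspace$ is only a measurable space. It is heavier, though. Also, $\hat X$ should be built on the basis of $X$ and driven by $(\vmark,B)$, not by $(\tilde\vmark,\tilde B)$. Once $\mathrm{Law}(\vmark,\hat X)=\nu$ is known, $\hat X$ is itself a solution of the nonlinear SDE, so pathwise uniqueness from (i) closes the argument at once.

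\textbf{What your route buys.} It gives well-posedness of the frozen-law equation and a stability estimate in the law variable that is decoupled from the evaluation path. The general distance $\cW_t(\mu,\nu)$ you sketch, however, is not actually defined. Couplings with mismatched marks are also not controllable, since $G$ has no regularity in the marks. You should restrict explicitly to synchronous couplings, which is all you use.
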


Our second result concerns the limit $N\to \infty$.
The comparison process is obtained by solving the nonlinear SDE for each
vertex input $(\vmark_i,B^i)$ and then, for each directed edge, solving the edge
equation driven by the corresponding edge input $(\emark_{ij},W^{ij})$. Thus
the limiting object is an explicit dissociated vertex-edge array, built on the
same sources of randomness as the particle system. The theorem first gives
pathwise coupling estimates for a typical particle and a typical edge, and then
projects this coupling to the empirical law of triples
$(X^{i,N}_t,X^{j,N}_t,U^{ij,N}_t)$.

\begin{thm}\label{mr2}
Grant Assumptions~\ref{asr}, \ref{asm} and~\ref{asc} and suppose that either
$\phi$ is bounded or~\eqref{m2c} and
\begin{equation}\label{meu}
\text{there is $\theta_0>0$ such that}\quad
\int_{\vmarkspace^2\times \emarkspace}
\exp\big(\theta_0 |G(\pmark,\qmark,\rmark)|^2\big)
\vmarklaw(\dd \pmark) \vmarklaw(\dd \qmark)
\emarklaw(\dd \rmark)<\infty.
\end{equation}
For each $i\geq 1$, consider the solution $\bX^i=(\bX^i_t)_{t\geq 0}$ to the nonlinear SDE corresponding to
$\vmark_i$ and $B^i$. For $i,j\geq 1$ with $i\neq j$, consider the pathwise unique solution $\bU^{ij}=(\bU^{ij}_t)_{t\geq 0}$ to
\begin{equation}\label{dfg2}
\bU^{ij}_t=G(\vmark_i,\vmark_j ,\emark_{ij})
+ \int_0^t \alpha(\bX^i_s,\bX^j_s,\bU^{ij}_s)\dd s + \int_0^t \beta(\bX^i_s,\bX^j_s,\bU^{ij}_s)\dd W_s^{ij}.
\end{equation}

(i) If $\phi$ is bounded, then for all $T>0$, there is a constant $C_T>0$ such that
for all $N\geq 2$,
$$
\E\Big[\sup_{t\in[0,T]}(|X^{1,N}_t-\bX^1_t|^2+|U^{12,N}_t-\bU^{12}_t|^2)\Big] \leq \frac{C_T}N.
$$

(ii) If~\eqref{m2c} and~\eqref{meu} hold, then
for all $T>0$, there is  $C_{T}>0$ such that
for all $N\geq 2$,
$$
\E\Big[\sup_{t\in[0,T]}(|X^{1,N}_t-\bX^1_t|+|U^{12,N}_t-\bU^{12}_t|)\Big] \leq \frac{C_T\exp(C_T\sqrt{\log N})}{N^{\frac 12}}.
$$

(iii) In either case above, introduce
$f_t=\mathrm{Law}(\bX^1_t,\bX^2_t,\bU^{12}_t)$.
For all $T>0$, there is  $C_T>0$ such that for all $N\geq 3$,
$$
\sup_{t\in [0,T]}\E\Big[\cW_1\Big(\frac1{N(N-1)}\sum_{i,j \in I_N,i\neq j}\delta_{(X^{i,N}_t,X^{j,N}_t,U^{ij,N}_t)},f_t\Big)\Big]
\leq \frac{C_T}{N^{\frac 1{2d+1}}}.
$$
\end{thm}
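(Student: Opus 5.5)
The proof rests on a synchronous coupling: the finite system and the limiting array $(\bX^i,\bU^{ij})$ are driven by the same $(\vmark_i,\emark_{ij},B^i,W^{ij})$. By exchangeability, it is enough to control $\delta_t:=\E[\sup_{s\le t}|X^{1,N}_s-\bX^1_s|^2]$ and $\eta_t:=\E[\sup_{s\le t}|U^{12,N}_s-\bU^{12}_s|^2]$, with the supremum handled by Burkholder--Davis--Gundy.

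\textbf{The edge estimate.} The coefficients $\alpha,\beta$ are Lipschitz, so BDG and Gronwall give
\[
\eta_t\le C_T\int_0^t (\delta_s+\E[\sup_{r\le s}|X^{2,N}_r-\bX^2_r|^2])\dd s = 2C_T\int_0^t\delta_s\dd s .
\]

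\textbf{The vertex estimate.} The interaction drift of $X^{1,N}$ minus $\Lambda_{\vmark_1,\bX^1}(s,\vmark_1,\bX^1)$ splits into three pieces:
\begin{itemize}
\item[(a)] $\frac1{N-1}\sum_{j}[\phi(U^{1j,N}_s)\gamma(X^{1,N}_s,X^{j,N}_s)-\phi(\bU^{1j}_s)\gamma(\bX^1_s,\bX^j_s)]$;
\item[(b)] the fluctuation term $\frac1{N-1}\sum_j Z^j_s$, where $Z^j_s:=\phi(\bU^{1j}_s)\gamma(\bX^1_s,\bX^j_s)-\Lambda_{\vmark_1,\bX^1}(s,\vmark_1,\bX^1)$.
\end{itemize}
For (b), Lemma~\ref{mes} gives $\bU^{1j}=\Gamma(\vmark_1,\bX^1,\vmark_j,\bX^j,\emark_{1j},W^{1j})$. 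Conditionally on $(\vmark_1,B^1)$, the triples $(\vmark_j,\bX^j,\emark_{1j},W^{1j})_{j\ge2}$ are i.i.d. with law $\mathrm{Law}(\vmark_1,\bX^1)\otimes\emarklaw\otimes\WW$, so the $Z^j_s$ are conditionally i.i.d. and centered. Hence $\E|\frac1{N-1}\sum_jZ^j_s|^2\le C/N$. When $\phi$ is bounded this is immediate; under \eqref{m2c}, it follows because $\bU^{1j}_s$ has a conditional second moment bounded uniformly in the marks. Uniqueness in law from Theorem~\ref{mr} ensures that $\Lambda$ computed with $\mathrm{Law}(\bX^1)$ is the correct centering.

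\textbf{Case (i): $\phi$ bounded.} In (a), $\gamma$ is bounded and Lipschitz and $\phi$ is bounded and Lipschitz. So (a) is bounded by $C(|X^{1,N}-\bX^1|+\frac1{N-1}\sum_j(|X^{j,N}-\bX^j|+|U^{1j,N}-\bU^{1j}|))$. Squaring, using exchangeability and inserting the edge estimate gives
\[
\delta_t\le C\int_0^t(\delta_s+\eta_s)\dd s+C_T/N .
\]
Gronwall then yields (i).

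\textbf{Case (ii): $\phi$ unbounded — the main obstacle.} Term (a) now contains products such as $|\phi(U^{1j,N})||X^{1,N}-\bX^1|$, and $U$ is unbounded, so a closed $L^2$ Gronwall fails. I would work in $L^1$ and truncate. Fix a level $K$. On the event where $\sup_{s\le T}|\bU^{1j}_s|\le K$ (and similarly for $U^{1j,N}$), the product is at most $C K|X-\bX|$. Off that event, Cauchy--Schwarz together with the sub-Gaussian tail \eqref{meu} gives a contribution of order $e^{-cK^2}$. This tail propagates to $\sup_t|\bU_t|$ and $\sup_t|U^{ij,N}_t|$ because $\alpha,\beta$ are bounded, so each is $G$ plus a bounded drift plus a martingale with bounded quadratic variation. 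Gronwall then gives an error of order $e^{CK}(N^{-1/2}+e^{-cK^2})$. Optimizing with $K\approx\sqrt{\log N}$ produces $C_T\exp(C_T\sqrt{\log N})N^{-1/2}$. The delicate points are to check that the $L^1$ errors close without squaring and that the $U$-differences, which enter linearly, are controlled via the $L^1$ edge estimate.

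\textbf{Part (iii): empirical measure.} Use the triangle inequality,
\[
\cW_1(\mu^N_t,f_t)\le\frac1{N(N-1)}\sum_{i\ne j}(|X^{i,N}_t-\bX^i_t|+|X^{j,N}_t-\bX^j_t|+|U^{ij,N}_t-\bU^{ij}_t|)+\cW_1(\bar\mu^N_t,f_t),
\]
where $\bar\mu^N_t$ is the empirical measure of the dissociated array $(\bX^i_t,\bX^j_t,\bU^{ij}_t)$. The first part is $O(N^{-1/2+\e})$ by (i)/(ii). For $\cW_1(\bar\mu^N_t,f_t)$ there is no i.i.d. structure, so I would:
\begin{itemize}
\item[1.] restrict to a ball of radius $R$, with tails controlled by moments;
\item[2.] partition it dyadically into cubes of side $\e$ in dimension $2d+1$;
\item[3.] bound the $\cW_1$ distance by $\e+\sum_{\text{cubes}}|\bar\mu^N_t(Q)-f_t(Q)|$;
\item[4.] estimate the variance of each $\bar\mu^N_t(Q)$ as a U-statistic of a dissociated array. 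Pairs $(i,j),(k,l)$ with disjoint indices are independent, so the variance is $O(1/N)$ and the sum over cubes is $O(\e^{-(2d+1)/2}N^{-1/2})$.
\end{itemize}
Optimizing $\e\approx N^{-1/(2d+1)}$ (up to the choice of $R$) gives the rate $N^{-1/(2d+1)}$, which dominates the coupling errors for $N\ge3$.
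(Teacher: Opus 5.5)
For (i) and (ii) you follow the paper's route: synchronous coupling, a conditionally i.i.d.\ fluctuation term centered by $\Lambda_{\mathrm{Law}(\vmark_1,\bX^1)}$, and in (ii) an $L^1$ argument with truncation at level $K\asymp\sqrt{\log N}$, using the sub-Gaussian tails of $G$ propagated through the bounded $\alpha,\beta$.

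\textbf{Part (ii).} The step you leave open is real but standard. In $L^1$, BDG produces $\E[(\int_S^t|D_s|^2\dd s)^{1/2}]$, which does not feed a Gronwall inequality directly. The paper handles this as follows:
\begin{enumerate}
\item work on time blocks of length $\delta$ with $\kappa_3(\delta+\delta^{1/2})=\frac12$;
\item bound the BDG term by $\delta^{1/2}\E[\sup_{[S,t]}|D|]$ and absorb it into the left-hand side;
\item run Gronwall inside each block, with constant proportional to $K$;
\item iterate over $\lceil T/\delta\rceil$ blocks.
\end{enumerate}
Alternatively, the inequality $C(\int_0^t|D_s|^2\dd s)^{1/2}\le\frac12\sup_{s\le t}|D_s|+\frac{C^2}{2}\int_0^t|D_s|\dd s$ closes it in one step. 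Here $D$ is the combined error of $X^{1,N}-\bX^1$, $X^{2,N}-\bX^2$ and $U^{12,N}-\bU^{12}$, and exchangeability is used to absorb.

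\textbf{Part (iii): the genuine gap.} Your single-scale bound is $\cW_1\lesssim\e+\sum_Q|\bar\mu^N_t(Q)-f_t(Q)|$, combined with $\E\sum_Q|\bar\mu^N_t(Q)-f_t(Q)|\lesssim\e^{-(2d+1)/2}N^{-1/2}$. This gives $\e+\e^{-(2d+1)/2}N^{-1/2}$, whose minimum over $\e$ is of order $N^{-1/(2d+3)}$. At your choice $\e=N^{-1/(2d+1)}$ the second term is of order $1$, so the claimed rate does not follow.

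To reach $N^{-1/(2d+1)}$ you need three ingredients:
\begin{enumerate}
\item the multiscale bound $\cW_1\lesssim\sum_{\ell\ge0}2^{-\ell}\sum_{Q\in\mathcal D_\ell}|\bar\mu^N_t(Q)-f_t(Q)|$ over nested dyadic partitions $\mathcal D_\ell$;
\item the refined variance bound $\mathrm{Var}(\bar\mu^N_t(Q))\le 4f_t(Q)/N$, rather than a uniform $O(1/N)$, which is needed for the Cauchy--Schwarz step over cubes;
\item a decomposition into dyadic shells instead of a crude ball cutoff, because $F(\vmark)$ is only square integrable.
\end{enumerate}
The variance bound does follow from your covariance count: each ordered pair meets $4N-6$ others, and each covariance is at most $f_t(Q)$. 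So the repair is feasible, but it amounts to re-proving \cite[Theorem 1]{fg} for dissociated arrays.

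The paper sidesteps this. It partitions the directed edges into the $2N$ classes $P_N^k$, indexed by $i+j \bmod N$ and by the sign of $i-j$. Each class consists of vertex-disjoint edges, so it yields about $N/2$ i.i.d.\ $f_t$-distributed triples. Joint convexity of $\cW_1$ then reduces everything to \cite[Theorem 1]{fg} for i.i.d.\ samples in dimension $2d+1$ with $q=2$.
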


The Wasserstein distance $\cW_1$ between two probability measures $f,g$ on $\rr^d\times\rr^d\times \rr$ is defined
by
$$
\cW_1(f,g)=\inf\Big\{\E[|X-\tX|+|Y-\tY|+|U-\tU|] : (X,Y,U)\sim f, (\tX,\tY,\tU)\sim g\Big\}.
$$

The first propagation estimate is in $L^2$ and uses the boundedness of $\phi$.
When $\phi$ is unbounded but Lipschitz, the proof instead uses a
sub-Gaussian control of the edge variable and yields the weaker $L^1$ rate in
(ii). The same truncation strategy could be adapted
to an $L^2$ estimate, but this would produce a much
worse rate than in the $L^1$ estimate above. We therefore state
only the $L^1$ bound.
Point (iii) then translates these couplings into the
edge time-marginals analogue of classical propagation of chaos.

\subsection{Propagation of dissociatedness}
In the classical setting, propagation of chaos is
represented by independent copies of a single nonlinear process. Here the
limiting object is an array: subarrays supported on disjoint vertex sets are
independent, but an edge remains coupled with its two endpoints. The next remark
makes explicit how this Aldous--Hoover structure is propagated by the limiting
dynamics.

\begin{rk}\label{rk:mr2-coupling}
After fixing measurable versions of the strong solution maps, we
may write
$$
\bX^i=\mathsf{F}(\vmark_i,B^i),\qquad
\bU^{ij}=\mathsf{G}(\vmark_i,B^i,\vmark_j,B^j,\emark_{ij},W^{ij}),
$$
where $\mathsf{F}:\vmarkspace\times\cC_m\to\cC_d$ is the solution map for the
nonlinear SDE and
$$
\mathsf{G}(\pmark,v,\qmark,\tilde v,\rmark,w)
=\Gamma(\pmark,\mathsf{F}(\pmark,v),\qmark,\mathsf{F}(\qmark,\tilde v),\rmark,w).
$$
The dissociatedness (Aldous--Hoover representation) of Assumption~\ref{asr} is therefore propagated by
the limit dynamics: vertices carry $(\vmark_i,B^i)$ and directed edges carry
$(\emark_{ij},W^{ij})$. Hence, for any $t\geq 0$, for any collection 
$C\subset \{(i,j) : i\geq 1, j\geq 1, i\neq j\}$ such that for all $(i,j),(i',j') \in C$ distinct, the four numbers $i,j,i',j'$ are distinct, the variables $(\bX^i_t,\bX^j_t,\bU^{ij}_t)_{(i,j)\in C}$ are
i.i.d. This propagation of dissociatedness is the analogue of the classical propagation of chaos.
\end{rk}

\subsection{Comments}\label{dis}

One line of non-exchangeable
mean-field theory starts from a fixed graph kernel or graphon and derives an
extended Vlasov or McKean--Vlasov equation, see for
instance the works cited above~\cite{chiba2018mean,kaliuzhnyi2018mean,bayraktar2023graphon,jabin2025mean,jabin2023mean}.
This is particularly well-suited to non-adaptive interaction structures,
with fixed interaction weights. 
Here the fixed-label picture means the following type of representation: one
embeds the vertices into a predetermined latent space $D$ and represents the
edge structure at time $t$ by a deterministic kernel
$w_t:D\times D\to \rr$, evaluated at the two labels. The label of a vertex is
chosen once and for all, and the random evolution is not allowed to enlarge the
domain by adding new vertex or edge randomness.
\vip

A second line studies dynamic random networks using exchangeable arrays and
graph limits, closely connected with the Aldous--Hoover viewpoint; see
Crane~\cite{crane2016dynamic,crane2017exchangeable}, Černỳ and Klimovsky~\cite{cerny2020markovian}, Athreya, Den Hollander and Röllin~\cite{athreya2021graphon}
and Braunsteins, den Hollander and Mandjes~\cite{braunsteins2022graphon,braunsteins2023sample}.
These papers actually study some graph-valued Markov processes. There are no underlying  particles.

\vip
Closer to our setting are works on genuine co-evolutionary dynamics, where the
edge variables and the vertex states influence each other. In the deterministic
case, Gkogkas, Kuehn and Xu~\cite{gkogkas2023continuum,gkogkas2025mean}
establish continuum and mean-field limits for adaptive Kuramoto-type networks
through evolving kernels, graph measures, or signed digraph measures. These
models share the pairwise feedback mechanism with ours, but they are fully
deterministic systems.

\vip
Probabilistic models with intrinsic stochasticity have also been considered.
Bayraktar and Wu~\cite{bayraktar2021mean} study a continuous-time model in
which both the vertex and edge states take values in some countable spaces, while
Ganguly, Spiliopoulos and Sussman~\cite{ganguly2025mean} analyze a discrete-time
latent-variable model with binary dynamic edges, feedback effects, and
graphon/multiplexon limits.

\vip
The companion work \cite{zhou2025non} develops a structural metric and sampling
viewpoint for the same system~\eqref{s1}-\eqref{s2} when the weight adaptation is
linear and non noisy (that is when  $\phi(u)=u$, $\alpha(x,y,u)=a(x,y)+b(x,y)u$ and $\beta(x,y,u)=0$). In that setting, weight fluctuations can be absorbed at the level of the empirical
state-weight structure. The main stability estimate in~\cite{zhou2025non} controls the
evolution in a hybrid Wasserstein and cut distance by the initial structural
error and the graphon sampling error.

\vip
Here we allow a more general edge dynamics,
where the averaging reduction is no longer available. The edge variables solve their own noisy dynamics and enter
the particle equation nonlinearly, so the full edge law and its correlation with
the endpoint trajectories have to be retained in the mean-field description. To
our knowledge, this is the first mean-field result for fully
nonlinear and noisy adaptive weights.

\subsection{Technical comments}
We finish with a few comments on possible extensions and on the rates obtained
above. First, it would be natural to relax the boundedness assumptions on the
coefficients, while keeping global Lipschitz continuity. This should be possible
under suitable additional moment assumptions, but would require more involved
estimates. 
\vip

The convergence rate in Theorem~\ref{mr2}-(iii) is not intended to be optimal.
Our proof partitions the directed edges, so that each independent
subsample has only order $N$ (independent) elements. Applying the standard empirical
$\cW_1$ estimate in dimension $2d+1$, see~\cite[Theorem~1]{fg}, gives the rate
$N^{-1/(2d+1)}$. In the idealized situation where the $N(N-1)$ edge
triples were independent, the same general empirical $\cW_1$ estimate
would give a rate of order $N^{-2/(2d+1)}$. We do not know what the optimal rate should be.

\vip

A related extension would be to replace the diffusive particle dynamics by
jump-type dynamics, as in integrate-and-fire models from mathematical
neuroscience. Such models are often driven by Poisson events rather than
Brownian noises, see for instance \cite{jabin2024non,jabin2023mean}. We expect
that the same dissociated vertex-edge viewpoint should remain useful.

\subsection{Plan of the paper}

Section~2 collects the estimates needed for the nonlinear operator $\Lambda$ and then proves the well-posedness and propagation results. Section~3 proves the measurability and non-anticipativity statements for the edge solution map $\Gamma$ and the averaged operator $\Lambda$.

\section{Main proofs}

\subsection{Estimates}

The purpose of this
subsection is to keep the inequality estimates separate from the fixed-point and
propagation arguments. We use the map
$\Gamma$ from Lemma~\ref{mes} and the definition of
$\Lambda_\mu$; the estimates collected here will then be used in the proof of
Lemma~\ref{Lambda}. We first record estimates for the edge map $\Gamma$, with
one auxiliary space for the edge mark $\emark$ and another one for
the Brownian input $W$. We then derive the corresponding estimates for the
averaged operator $\Lambda_\mu$.

\begin{prop}\label{estgamma}
Grant Assumptions~\ref{asr}, \ref{asm} and~\ref{asc}. For all $T>0$, there is
$C_T>0$, depending only on $T,\alpha,\beta$, such that the
following estimates hold.
\vip
(i) For every pair of independent admissible inputs $(\vmark,X)$ and $(\nmark,Y)$ defined on a probability space $(\Omega,\cF,\PP)$,
every auxiliary edge-mark probability space
$(\Omega^\dagger,\cF^\dagger,\PP^\dagger)$ carrying an
$\emarkspace$-valued random variable $\emark$ with law
$\emarklaw$, and every auxiliary Brownian space
$(\Omega^\#,\cF^\#,(\cF_t^\#)_{t\geq0},\PP^\#)$ carrying a one-dimensional
Brownian motion $W$, all independent of each other,
\begin{align*}
\E^\dagger\Big[\E^\#
\Big[\sup_{t\in[0,T]}|\Gamma_t(\vmark,X,\nmark,Y,\emark,W)|^2\Big]
\Big]\leq 2\E^\dagger\big[|G(\vmark,\nmark,\emark)|^2\big]+C_T.
\end{align*}
\vip
(ii) For every pair of vertex-side triples $(\vmark,X,\tilde X)$ and
$(\nmark,Y,\tilde Y)$ such that $(\vmark,X)$, $(\vmark,\tilde X)$, $(\nmark,Y)$ and
$(\nmark,\tilde Y)$ are admissible and such that $(\vmark,X,\tilde X)$ is independent of $(\nmark,Y,\tilde Y)$, and every pair of auxiliary spaces as in (i),
independent of these triples, for all $t\in[0,T]$,
\begin{align*}
&\E^\dagger\Big[\E^\#
\Big[\sup_{u\in[0,t]}|\Gamma_u(\vmark,X,\nmark,Y,\emark,W)
-\Gamma_u(\vmark,\tilde X,\nmark,\tilde Y,\emark,W)|^2\Big]\Big]\\
\leq& C_T\Big(\sup_{s\in[0,t]}|X_s-\tilde X_s|^2
+\sup_{s\in[0,t]}|Y_s-\tilde Y_s|^2\Big).
\end{align*}
\end{prop}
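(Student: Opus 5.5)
The plan is to freeze the vertex-side randomness and work conditionally. Since $(\vmark,X)$, $(\nmark,Y)$, the edge mark space and the Brownian space are mutually independent, the expectations $\E^\dagger\E^\#$ are taken for fixed $\omega\in\Omega$. For fixed $\omega$, the paths $x=X(\omega)$, $y=Y(\omega)$ and the marks $a=\vmark(\omega)$, $b=\nmark(\omega)$ are deterministic continuous inputs.

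We will build a filtered product space $\Omega\times\Omega^\dagger\times\Omega^\#$ with filtration $\cF_t\otimes\cF^\dagger\otimes\cF^\#_t$. On it, $(\vmark,X)$ and $(\nmark,Y)$ remain admissible, $\emark$ is initial-time measurable, and $W$ is a Brownian motion. Lemma~\ref{mes} then identifies $\Gamma(\vmark,X,\nmark,Y,\emark,W)$ a.s. with the strong solution $U$ of \eqref{dfg}. By Fubini, for $\PP$-a.e. $\omega$ and $\PP^\dagger$-a.e. $\emark$, the process $t\mapsto\Gamma_t(a,x,b,y,\emark,W)$ solves on $\Omega^\#$ the SDE
$$U_t=G(a,b,\emark)+\int_0^t\alpha(x_s,y_s,U_s)\dd s+\int_0^t\beta(x_s,y_s,U_s)\dd W_s,$$
which has deterministic time-dependent coefficients. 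The point requiring care is this sectioning step: the identity $U=\Gamma(\cdots)$ holds a.s. on the product space, and we need it to transfer to a.e. section. This follows from Fubini applied to the measurable event $\{\forall t,\ U_t=\Gamma_t(\cdots)\}$, once the solution is realized section-wise. One could alternatively apply Lemma~\ref{mes} directly with $\Omega^\#$ as the space and deterministic $x,y$. However, a deterministic path need not satisfy \eqref{condA} in the required form, so the product construction is cleaner. Here the non-anticipativity \eqref{prog} and the independence hypotheses are what make the conditioning legitimate.

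For (i), we bound the solution using the boundedness of $\alpha$ and $\beta$ together with Doob's inequality:
$$\sup_{t\le T}|U_t|^2\le 2|G(a,b,\emark)|^2+2\Big(T\|\alpha\|_\infty+\sup_{t\le T}\Big|\int_0^t\beta\,\dd W\Big|\Big)^2.$$
Taking $\E^\#$ gives the bound $2|G|^2+C_T$. The constant $C_T$ depends only on $T$, $\|\alpha\|_\infty$ and $\|\beta\|_\infty$, and not on $G$; we must keep the factor in front of $|G|^2$ equal to $2$, which the splitting above does. Taking $\E^\dagger$ then concludes (i).

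For (ii), we again fix $\omega$ and $\emark$. Let $U$ and $\tilde U$ be driven by the same $W$ with the same initial value $G(a,b,\emark)$, but with inputs $(x,y)$ and $(\tilde x,\tilde y)$ respectively. Setting $D_t=U_t-\tilde U_t$, the Lipschitz continuity of $\alpha$ and $\beta$ together with the Cauchy--Schwarz and Doob inequalities gives
$$\E^\#\Big[\sup_{u\le t}|D_u|^2\Big]\le C_T\int_0^t\Big(\E^\#\Big[\sup_{v\le s}|D_v|^2\Big]+|x_s-\tilde x_s|^2+|y_s-\tilde y_s|^2\Big)\dd s.$$
The left-hand side is finite by (i), assuming for now that $\E^\dagger|G|^2<\infty$ on the relevant section. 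If not, the difference $D$ does not involve $G$ at all, and we can localize with stopping times, since the initial values cancel. Gronwall's lemma then yields $C_T\big(\sup_{s\le t}|x_s-\tilde x_s|^2+\sup_{s\le t}|y_s-\tilde y_s|^2\big)$ with $C_T$ depending only on $T$ and the Lipschitz constants. Finally, integrating in $\emark$ gives (ii) pointwise in $\omega$, which is how the statement is to be read, with the right-hand side random.
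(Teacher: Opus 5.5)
Your proposal is correct and follows the paper's proof. For (i) you use boundedness of $\alpha,\beta$ plus a maximal inequality (Doob for you, BDG in the paper); for (ii) you use Lipschitz continuity, BDG and Gronwall; you then integrate in $\emark$ and read the right-hand side of (ii) $\omega$-wise. The one addition is that you make explicit the identification of the $\omega$-sections of $\Gamma(\vmark,X,\nmark,Y,\emark,W)$ with solutions of the frozen-input SDE on $\Omega^\#$, which the paper takes for granted when it says Lemma~\ref{mes} gives the equation. Your localization caveat in (ii) is unnecessary: since $\alpha,\beta$ are bounded, $\E^\#[\sup_{u\le t}|D_u|^2]\le C_T$ regardless of $G$.
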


\begin{proof}
For (i), set
$U_t=\Gamma_t(\vmark,X,\nmark,Y,\emark,W)$, $t\geq0$. Lemma~\ref{mes} gives
$$
U_t=G(\vmark,\nmark,\emark)+\int_0^t \alpha(X_s,Y_s,U_s)\dd s
+\int_0^t \beta(X_s,Y_s,U_s)\dd W_s.
$$
Since $\alpha$ and $\beta$ are bounded, the Cauchy--Schwarz inequality for the
drift term, and the Burkholder-Davies-Gundy inequality for the
martingale term, give
$$
\E^\#\Big[\sup_{t\in[0,T]}|U_t-G(\vmark,\nmark,\emark)|^2\Big]\leq C_T.
$$
Hence
$$
\E^\#\Big[\sup_{t\in[0,T]}|U_t|^2\Big]\leq
2|G(\vmark,\nmark,\emark)|^2+C_T,
$$
and applying $\E^\dagger$ allows us to conclude.
\vip
For (ii), set
$$
U_r=\Gamma_r(\vmark,X,\nmark,Y,\emark,W),\qquad
\tU_r=\Gamma_r(\vmark,\tilde X,\nmark,\tilde Y,\emark,W),\qquad r\geq0.
$$
Recalling Lemma~\ref{mes}, we see that
\begin{align*}
U_r-\tU_r=&\int_0^r[\alpha(X_s,Y_s,U_s)-\alpha(\tilde X_s,\tilde Y_s,\tU_s)]\dd s
+\int_0^r[\beta(X_s,Y_s,U_s)-\beta(\tilde X_s,\tilde Y_s,\tU_s)]\dd W_s.
\end{align*}
Using the Lipschitz continuity of $\alpha,\beta$, the Cauchy--Schwarz inequality
for the drift term, and the Burkholder-Davies-Gundy inequality for the martingale term, we get, for $r\leq T$,
\begin{align*}
\E^\#\Big[\sup_{u\in[0,r]}|U_u-\tU_u|^2\Big]
&\leq C_T\int_0^r\Big(|X_s-\tilde X_s|^2+|Y_s-\tilde Y_s|^2 +\E^\#\Big[\sup_{u\in[0,s]}|U_u-\tU_u|^2\Big]\Big)\dd s.
\end{align*}
Thanks to Gronwall's lemma,
\begin{align*}
\E^\#\Big[\sup_{u\in[0,r]}|U_u-\tU_u|^2\Big]
&\leq C_T\Big(\sup_{s\in[0,t]}|X_s-\tilde X_s|^2
+\sup_{s\in[0,t]}|Y_s-\tilde Y_s|^2\Big).
\end{align*}
Applying $\E^\dagger$ allows us to conclude.
\end{proof}

\begin{prop}\label{estlambda}
Grant Assumptions~\ref{asr}, \ref{asm} and~\ref{asc} and suppose either that
$\phi$ is bounded or that~\eqref{m2c} holds.
For all $T>0$, there is $C_T>0$ such that the following estimates hold.
\vip
(i) For any admissible probability measure $\mu$ on
$\vmarkspace\times\cC_d$, any admissible pair $(\vmark,X)$, any
$t\in[0,T]$,
\begin{equation*}
|\Lambda_\mu(t,\vmark,X)|\leq C_T\quad \hbox{a.s.}
\end{equation*}
\vip
(ii) Let $(\Omega,\cF,(\cF_t)_{t\geq0},\PP)$ support an $\cF_0$-measurable random
variable $\vmark$ and continuous adapted processes $X=(X_t)_{t\geq0}$ and
$\tX=(\tX_t)_{t\geq0}$ such that $(\vmark,X)$ and $(\vmark,\tX)$ are admissible.
Write $\Lambda_{\vmark,X}:=\Lambda_{\mathrm{Law}(\vmark,X)}$ and
$\Lambda_{\vmark,\tX}:=\Lambda_{\mathrm{Law}(\vmark,\tX)}$. Then, for all
$t\in[0,T]$,
\begin{equation*}
\E\Big[|\Lambda_{\vmark,X}(t,\vmark,X)-\Lambda_{\vmark,\tX}(t,\vmark,\tX)|^2\Big]
\leq C_T\E\Big[\sup_{s\in[0,t]}|X_s-\tX_s|^2\Big].
\end{equation*}
\end{prop}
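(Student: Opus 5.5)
For (i), I would bound the integrand directly, using that $\gamma$ is bounded. If $\phi$ is bounded, then $|\Lambda_\mu(t,\pmark,x)|\le \|\phi\|_\infty\|\gamma\|_\infty$ at every point, and there is nothing more to do. If instead \eqref{m2c} holds, I would use $|\phi(u)|\le |\phi(0)|+L_\phi|u|$ to reduce matters to bounding
$$\int\!\!\int |\Gamma_t(\pmark,x,\qmark,y,\rmark,w)|\,\emarklaw(\dd\rmark)\WW(\dd w)\mu(\dd\qmark,\dd y).$$
I would realize $\mu$ as the law of an admissible pair $(\nmark,Y)$ independent of $(\vmark,X)$. The edge mark and the Brownian motion then live on auxiliary spaces $\Omega^\dagger,\Omega^\#$, exactly as in Proposition~\ref{estgamma}.

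The key point is that, conditionally on $(\vmark,X,\nmark,Y)$, the estimate of Proposition~\ref{estgamma}-(i) holds pointwise. Its proof only uses the edge SDE for fixed vertex inputs, through BDG and boundedness of $\alpha,\beta$. This gives $\E^\dagger\E^\#[\sup_{[0,T]}|\Gamma|^2]\le 2\int |G(\vmark,\nmark,\rmark)|^2\emarklaw(\dd\rmark)+C_T$, and by \eqref{m2c} the right-hand side is bounded uniformly in $\vmark,\nmark$. Cauchy--Schwarz then bounds the first moment, and integrating in $(\nmark,Y)$ gives a constant $C_T$, almost surely. I must make sure that fixing $(\vmark,X)=(\pmark,x)$ and $(\nmark,Y)$ is legitimate. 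By Fubini, the SDE \eqref{dfg} holds for a.e. fixed vertex realization on the product space, and I would take this as what Lemma~\ref{mes} provides.

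For (ii), I would split the difference into two terms:
$$\Lambda_{\vmark,X}(t,\vmark,X)-\Lambda_{\vmark,\tX}(t,\vmark,\tX)=\int\!\!\int \Big[\phi(\Gamma_t(\vmark,X,\nmark,Y,\emark,W))\gamma(X_t,Y_t)-\phi(\Gamma_t(\vmark,\tX,\nmark,\tY,\emark,W))\gamma(\tX_t,\tY_t)\Big],$$
where $(\nmark,Y,\tY)$ is an independent copy of $(\vmark,X,\tX)$ on a copy space. Coupling $Y,\tY$ this way is what allows a single integral to represent both laws. Write $\Gamma,\tilde\Gamma$ for the two edge values. The integrand equals
$$[\phi(\Gamma)-\phi(\tilde\Gamma)]\gamma(X_t,Y_t)+\phi(\tilde\Gamma)[\gamma(X_t,Y_t)-\gamma(\tX_t,\tY_t)].$$
I would bound the first term by $\|\gamma\|_\infty L_\phi|\Gamma-\tilde\Gamma|$. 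I would bound the second by $|\phi(\tilde\Gamma)|\,L_\gamma(|X_t-\tX_t|+|Y_t-\tY_t|)$.

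Next I would apply Jensen (Cauchy--Schwarz) inside the averaging integrals, square, and take $\E$. For the first term, Proposition~\ref{estgamma}-(ii), applied conditionally, gives at most $C_T(\sup_{[0,t]}|X-\tX|^2+\sup_{[0,t]}|Y-\tY|^2)$. Since $(\nmark,Y,\tY)$ has the same law as $(\vmark,X,\tX)$, its expectation is $2C_T\E[\sup_{[0,t]}|X-\tX|^2]$.

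For the second term, I expect the main obstacle: $\phi(\tilde\Gamma)$ may be unbounded and multiplies a difference. When $\phi$ is bounded the term is trivial. Under \eqref{m2c}, I would first apply Cauchy--Schwarz in the edge variables $(\rmark,w)$ for fixed vertex data. The factor $\int\!\!\int|\phi(\tilde\Gamma)|^2$ is then bounded by a deterministic constant, by the uniform bound established in part (i). The remaining factor is $|X_t-\tX_t|+|Y_t-\tY_t|$, which does not depend on the edge variables. Averaging over $(\nmark,Y,\tY)$ and taking $\E$ gives $C\E[|X_t-\tX_t|^2]$. Adding both contributions yields (ii).
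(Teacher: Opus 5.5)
Your proposal is correct and follows essentially the paper's route. You realize $\mu$ (and, for (ii), both $\mathrm{Law}(\vmark,X)$ and $\mathrm{Law}(\vmark,\tX)$) through an independent copy $(\nmark,Y,\tY)$ of $(\vmark,X,\tX)$ with a shared edge input $(\emark,W)$, use Proposition~\ref{estgamma}-(i) with \eqref{m2c} for the uniform (in the vertex data) second-moment bound on the edge value, and use Proposition~\ref{estgamma}-(ii) for the stability term. The only cosmetic difference is in the unbounded case: you apply Cauchy--Schwarz in the edge variables before averaging over the neighbour, whereas the paper applies Jensen on the full product space and then conditions $1+|U_t|^2$ on the vertex data, which gives the same estimate.
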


\begin{proof}
We start with (i). Additionally to $(\Omega,\cF,(\cF_t)_{t\geq0},\PP)$ on which is defined $(\xi,X)$, we introduce a probability space
$(\Omega^*,\cF^*,(\cF_t^*)_{t\geq0},\PP^*)$ endowed with an admissible pair
$(\nmark,Y)$ with law $\mu$, a probability space
$(\Omega^\dagger,\cF^\dagger,\PP^\dagger)$ endowed with an
$\emarkspace$-valued random variable $\emark$ with
law $\emarklaw$, and a probability space
$(\Omega^\#,\cF^\#,(\cF_t^\#)_{t\geq0},\PP^\#)$ endowed with a one-dimensional
Brownian motion $W$. The three auxiliary spaces are taken independent of $(\Omega,\cF,(\cF_t)_{t\geq0},\PP)$ and of each other. Recalling Lemma~\ref{Lambda},
$$
\Lambda_{\mu}(t,\xi,X)=\E^*\Big[\E^\dagger\Big[\E^\#\Big[
\phi(\Gamma_t(\vmark,X,\nmark,Y,\emark,W)) \gamma(X_t,Y_t)
\Big]\Big]\Big].
$$
Define
\begin{equation*}
\Psi_{\mu,T}(\vmark,X):=
\E^*\Big[\E^\dagger\Big[\E^\#\Big[
\sup_{t\in [0,T]} |\phi(\Gamma_t(\vmark,X,\nmark,Y,\emark,W))|
\Big]\Big]\Big].
\end{equation*}
If $\phi$ is bounded, then $\Psi_{\mu,T}(\vmark,X)\leq ||\phi||_\infty$. If~\eqref{m2c} holds, then we use that $|\phi(u)|\leq C(1+|u|)$, Proposition~\ref{estgamma}-(i), together with the Cauchy--Schwarz inequality. This gives \begin{align*}
\Psi_{\mu,T}(\vmark,X)
&\leq C+C\E^*\Big[\E^\dagger\Big[\E^\#
\Big[\sup_{t\in[0,T]}|\Gamma_t(\vmark,X,\nmark,Y,\emark,W)|\Big]\Big]\Big]
\leq C+ C_T \E^*[\E^\dagger[|G(\xi,\xi',\xi^\dagger)|^2]]^{\frac12}.
\end{align*}
This last quantity is bounded by some constant $C_T$ thanks to~\eqref{m2c}.
\vip

Then~(i) follows from
$|\Lambda_\mu(t,\vmark,X)|\leq ||\gamma||_\infty\Psi_{\mu,T}(\vmark,X)$.
\vip
We now prove (ii). Additionally to $(\Omega,\cF,(\cF_t)_{t\geq0},\PP)$, we
introduce a second probability space
$(\Omega^*,\cF^*,(\cF_t^*)_{t\geq0},\PP^*)$ endowed with a copy
$(\vmark^*,X^*,\tX^*)$ of $(\vmark,X,\tX)$, as well as a third probability space
$(\Omega^\dagger,\cF^\dagger,\PP^\dagger)$ endowed with an
$\emarkspace$-valued random variable $\emark$ with
law $\emarklaw$ and a fourth probability space
$(\Omega^\#,\cF^\#,(\cF_t^\#)_{t\geq0},\PP^\#)$ endowed with a one-dimensional
Brownian motion $W$. On
$$
(\bOmega,\bcF,(\bcF_t)_{t\geq0},\bP)
=\big(\Omega\times\Omega^*\times\Omega^\dagger\times\Omega^\#,
\cF\otimes\cF^*\otimes\cF^\dagger\otimes\cF^\#,
(\cF_t\otimes\cF_t^*\otimes\cF^\dagger\otimes\cF_t^\#)_{t\geq0},
\PP\otimes\PP^*\otimes\PP^\dagger\otimes\PP^\#\big),
$$
the four families $(\vmark,X,\tX)$, $(\vmark^*,X^*,\tX^*)$,
$\emark$ and $W$ are independent. We consider the pathwise unique
solutions $U$ and $\tU$ to
\begin{align*}
U_t=&G(\vmark,\vmark^*,\emark)
+\int_0^t \alpha(X_s,X_s^*,U_s)\dd s
+\int_0^t \beta(X_s,X_s^*,U_s)\dd W_s,\\
\tU_t=&G(\vmark,\vmark^*,\emark)
+\int_0^t \alpha(\tX_s,\tX_s^*,\tU_s)\dd s
+\int_0^t \beta(\tX_s,\tX_s^*,\tU_s)\dd W_s.
\end{align*}
By Lemma~\ref{mes},
$U=\Gamma(\vmark,X,\vmark^*,X^*,\emark,W)$ and
$\tU=\Gamma(\vmark,\tX,\vmark^*,\tX^*,\emark,W)$. Thus, as in (i),
$$
\Lambda_{\vmark,X}(t,\vmark,X)
=\E^*\Big[\E^\dagger\Big[\E^\#\big[\phi(\Gamma_t(\vmark,X,\vmark^*,X^*,\emark,W))\gamma(X_t,X_t^*)\big]\Big]\Big]
=\E^*\Big[\E^\dagger\Big[\E^\#\big[\phi(U_t)\gamma(X_t,X_t^*)\big]\Big]\Big].
$$
Similarly,
$\Lambda_{\vmark,\tX}(t,\vmark,\tX)
=\E^*[\E^\dagger[\E^\#[\phi(\tU_t)\gamma(\tX_t,\tX_t^*)]]]$, so that
\begin{align}
\E\Big[|\Lambda_{\vmark,X}(t,\vmark,X)-\Lambda_{\vmark,\tX}(t,\vmark,\tX)|^2\Big]
=& \E\Big[\Big|\E^*\Big[\E^\dagger\Big[\E^\#\Big[\phi(U_t)\gamma(X_t,X_t^*)
-\phi(\tU_t)\gamma(\tX_t,\tX_t^*)\Big]\Big]\Big]\Big|^2\Big]\notag\\
\leq& \E\Big[\E^*\Big[\E^\dagger\Big[\E^\#\Big[
\Big|\phi(U_t)\gamma(X_t,X_t^*)
-\phi(\tU_t)\gamma(\tX_t,\tX_t^*)\Big|^2\Big]\Big]\Big]\Big]\notag\\
=& \bE\Big[\Big|\phi(U_t)\gamma(X_t,X_t^*)
-\phi(\tU_t)\gamma(\tX_t,\tX_t^*)\Big|^2\Big]. \label{lpd}
\end{align}
\vip

{\it Case 1: $\phi$ bounded.} Using that $\phi,\gamma$ are bounded and Lipschitz
continuous, \eqref{lpd} implies that
\begin{align}\label{1jab1}
\E\Big[|\Lambda_{\vmark,X}(t,\vmark,X)-\Lambda_{\vmark,\tX}(t,\vmark,\tX)|^2\Big]
\leq& C\bE[|U_t-\tU_t|^2]
+C\bE[|X_t-\tX_t|^2+|X_t^*-\tX_t^*|^2].
\end{align}
By Proposition~\ref{estgamma}-(ii),
\begin{equation}\label{1jab2}
\bE[|U_t-\tU_t|^2]\leq
C_T\bE\Big[\sup_{s\in[0,t]}\big(|X_s-\tX_s|^2+|X_s^*-\tX_s^*|^2\big)\Big].
\end{equation}
Inserting~\eqref{1jab2} into~\eqref{1jab1}, we find
\begin{align*}
\E\Big[|\Lambda_{\vmark,X}(t,\vmark,X)\!-\!\Lambda_{\vmark,\tX}(t,\vmark,\tX)|^2\Big]
\leq C_T\bE\Big[\sup_{s\in[0,t]}
\big(|X_s\!-\!\tX_s|^2\!+\!|X_s^*\!-\!\tX_s^*|^2\big)\Big]
\leq C_T\E\Big[\sup_{s\in[0,t]}|X_s\!-\!\tX_s|^2\Big],
\end{align*}
as desired, because $(X^*,\tX^*)$ has the same law as $(X,\tX)$.
\vip
{\it Case 2: \eqref{m2c} holds.} Since
$\phi$ has linear growth, since $\phi,\gamma$ are Lipschitz continuous and since $\gamma$ is bounded, \eqref{lpd} implies that
\begin{align}\label{jab1}
\E\Big[|\Lambda_{\vmark,X}(t,\vmark,X)-\Lambda_{\vmark,\tX}(t,\vmark,\tX)|^2\Big]
\leq& C\bE[|U_t-\tU_t|^2]\notag\\
&+C\bE\Big[(1+|U_t|^2)
\big(|X_t-\tX_t|^2+|X_t^*-\tX_t^*|^2\big)\Big].
\end{align}
Moreover, \eqref{1jab2} still holds, while Proposition~\ref{estgamma}-(i) and~\eqref{m2c}
give, uniformly in $(\omega,\omega^*)\in \Omega\times\Omega^*$,
$$
\E^\dagger\big[\E^\#[1+|U_t|^2]\big]\leq C_T.
$$
Consequently,
\begin{align}
&\bE\Big[(1+|U_t|^2)
\big(|X_t-\tX_t|^2+|X_t^*-\tX_t^*|^2\big)\Big]\notag\\
&=\E\Big[\E^*\Big[\big(|X_t-\tX_t|^2+|X_t^*-\tX_t^*|^2\big)
\E^\dagger\big[\E^\#[1+|U_t|^2]\big]\Big]\Big]\notag\\
&\leq C_T\E\Big[\E^*\big[|X_t-\tX_t|^2+|X_t^*-\tX_t^*|^2\big]\Big]\notag\\
&\leq C_T\E\Big[\sup_{s\in[0,t]}|X_s-\tX_s|^2\Big]. \label{jab2}
\end{align}
The conclusion follows by inserting~\eqref{1jab2} and~\eqref{jab2}
into~\eqref{jab1}.
\end{proof}

\subsection{Proof of Theorem~\ref{mr}}

\begin{proof}[Proof of Theorem~\ref{mr}]
We first prove (i). We start with pathwise uniqueness and consider two solutions
$(\vmark,X)$ and $(\vmark,\tX)$ to the nonlinear SDE,
defined on the same probability space and driven by the same Brownian motion.
Since $b$ and $\sigma$ are bounded and Lipschitz continuous, we obtain, for all $t \in [0,T]$,
\begin{align*}
\E\Big[\sup_{s\in [0,t]}|X_s-\tX_s|^2\Big]\leq& C_T \int_0^t \E[|X_s-\tX_s|^2] \dd s
+ C_T \int_0^t \E[|\Lambda_{\vmark,X}(s,\vmark,X)-\Lambda_{\vmark,\tX}(s,\vmark,\tX)|^2] \dd s\\
\leq& C_T \int_0^t \E\Big[\sup_{u \in [0,s]} |X_u-\tX_u|^2\Big]\dd u
\end{align*}
by Proposition~\ref{estlambda}-(ii). One completes the uniqueness proof using the Gronwall lemma.
\vip

We next check existence by a Picard iteration. We fix $(\Omega,\cF,(\cF_t)_{t\geq 0},\PP)$
on which are defined $\vmark$ and $B$. We define $X^0=(X^0_t)_{t\geq 0}$ by $X^0_t=F(\vmark)$.
Then $X^0$ satisfies~\eqref{condA} with $A_T=0$.
Once $X^n=(X^n_t)_{t\geq 0}$ (continuous, adapted and satisfying~\eqref{condA}) is built, we set
\begin{equation}\label{pic}
X^{n+1}_t=F(\vmark)+\int_0^t b(X^{n}_s) \dd s +\int_0^t \sigma(X^{n}_s) \dd B_s
+ \int_0^t \Lambda_{\vmark,X^n}(s,\vmark,X^{n})\dd s.
\end{equation}
First, $X^{n+1}=(X^{n+1}_t)_{t\geq 0}$ is continuous and satisfies~\eqref{condA}
for some constant $A_T$ which does not depend on $n$, because
$b,\sigma$ and $\Lambda_{\vmark,X^n}$ are bounded, see Proposition~\ref{estlambda}-(i).
Indeed, when computing, for $0\leq s \leq t \leq s+1 \leq T+1$, the quantity $\E[|X^{n+1}_t-X^{n+1}_s|^2]$, the two drift terms are bounded in $L^2$ by $C_T(t-s)^2\leq C_T(t-s)$, while Itô's
isometry gives
$$
\E\Big[\Big|\int_s^t \sigma(X^n_r)\dd B_r\Big|^2\Big]\leq C_T(t-s).
$$
The same computation as in the uniqueness proof shows that for all $n\geq 1$, all $t\in [0,T]$,
\begin{align*}
\E\Big[\sup_{s\in [0,t]}|X^{n+1}_s-X^n_s|^2\Big]
\leq& C_T \int_0^t \E\Big[\sup_{u \in [0,s]} |X^n_u-X^{n-1}_u|^2\Big]\dd u.
\end{align*}
The usual Picard argument implies that there is a continuous adapted process $X=(X_t)_{t\geq 0}$ such that
\begin{equation}\label{conv}
\lim_n\E\Big[\sup_{t\in [0,T]}|X^n_t-X_t|^2\Big]=0.
\end{equation}
Since $X^n$ satisfies~\eqref{condA}
for some constant $A_T$ which does not depend on $n$, the limit $X$ satisfies~\eqref{condA}.
Taking the limit $n\to \infty$ in~\eqref{pic}, we find that
$(X_t)_{t\geq 0}$ solves the nonlinear SDE, because
\begin{gather*}
X^{n+1}_t\to X_t, \quad \int_0^t b(X^{n}_s)\dd s \to \int_0^t b(X_s)\dd s, \quad
\int_0^t \sigma(X^{n}_s)\dd B_s \to \int_0^t \sigma(X_s)\dd B_s,\\
\text{and} \quad \int_0^t \Lambda_{\vmark,X^n}(s,\vmark,X^{n})\dd s\to  \int_0^t \Lambda_{\vmark,X}(s,\vmark,X)\dd s
\end{gather*}
in $L^2(\Omega)$. The first three convergences follow from~\eqref{conv} and the
Lipschitz continuity of $b$ and $\sigma$, while the last one follows from
Proposition~\ref{estlambda}-(ii) and~\eqref{conv}.
\vip

We now prove (ii). The law of the solution $(\vmark,X)$ to the nonlinear SDE built
above by Picard iteration does not depend on the choice of the probability
space, nor on the choices of $\vmark$ and $B$. Indeed, one can check by induction
that the law of $(\vmark,X^n,B)$ does not depend on these choices for all $n\geq 1$,
because $\Lambda_{\vmark,X^n}$ is shorthand for
$\Lambda_{\mathrm{Law}(\vmark,X^n)}$, which depends only on the law of
$(\vmark,X^n)$; see Lemma~\ref{Lambda}. The limit $(\vmark,X,B)$
inherits this property. In particular, the law of $(\vmark,X)$
does not depend on the probability space nor on the choices of $\vmark$ and $B$.
\vip
Consider now, on some space $(\Omega,\cF,(\cF_t)_{t\ge 0},\PP)$, a solution
$X=(X_t)_{t\geq 0}$ to the nonlinear SDE corresponding to some $\cF_0$-measurable $\vmark\sim \vmarklaw$
and to some $m$-dimensional $(\cF_t)_{t\geq 0}$-Brownian motion
$B=(B_t)_{t\geq 0}$. On this space, consider the solution $(\vmark,\hat X)$ built by Picard iteration
with $\vmark$ and $B$. By the pathwise uniqueness proved in (i), $(\vmark,X)=(\vmark,\hat X)$ a.s.
The conclusion follows.
\end{proof}

\subsection{Proof of Theorem~\ref{mr2}}

\begin{proof}[Proof of Theorem~\ref{mr2}]
We first prove (i).
Recall that $(X^{i,N}_t)_{t\geq 0,i \in I_N}$ and $(U^{ij,N}_t)_{t\geq 0, i,j \in I_N,i\neq j}$ were
defined in~\eqref{s1}-\eqref{s2}.
For all $i \geq 1$, we denote by $(\bX^i_t)_{t\geq 0}$ the solution to the nonlinear SDE
corresponding to $\vmark_i$ and $B^i$, i.e.
\begin{equation}\label{bX}
\bX^i_t=F(\vmark_i)+\int_0^t b(\bX^i_s)\dd s + \int_0^t \sigma(\bX^i_s)\dd B^i_s+
\int_0^t \Lambda_{\vmark_i,\bX^i}(s,\vmark_i,\bX^i) \dd s,
\end{equation}
and, for $i,j \geq 1$ with $i\neq j$, we consider $(\bU^{ij}_t)_{t\geq 0}$ solving~\eqref{dfg2}.
As a strong solution, $\bX^i$ is
$\sigma(\vmark_i,B^i)$-measurable. Using moreover Theorem~\ref{mr}-(ii) and
Assumption~\ref{asr}, we conclude that
\begin{equation}\label{ind}
\text{the family $((\vmark_i,\bX^i), i\geq 1)$ is i.i.d. and independent of $((\emark_{ij},W^{ij}), i,j\geq 1,  i\neq j)$.}
\end{equation}

Since $\alpha$ and $\beta$ are bounded and Lipschitz continuous, we have,
for all $t\in [0,T]$,
\begin{align*}
\E\Big[\sup_{s\in [0,t]}|U^{12,N}_s-\bU^{12}_s|^2\Big] \leq& C_T \int_0^t \E[|X^{1,N}_s-\bX^1_s|^2+|X^{2,N}_s-\bX^2_s|^2
+ |U^{12,N}_s-\bU^{12}_s|^2] \dd s\\
\leq&  C_T \int_0^t \E[|X^{1,N}_s-\bX^1_s|^2
+ |U^{12,N}_s-\bU^{12}_s|^2] \dd s
\end{align*}
by exchangeability. Thanks to the Gronwall lemma, for all $t \in [0,T]$,
\begin{align}
\E\Big[\sup_{s\in [0,t]}|U^{12,N}_s-\bU^{12}_s|^2\Big] \leq& C_T \int_0^t \E[|X^{1,N}_s-\bX^1_s|^2] \dd s
\leq C_T \E\Big[\sup_{s\in [0,t]}|X^{1,N}_s-\bX^1_s|^2\Big]. \label{ttt1}
\end{align}

Since $b$ and $\sigma$ are Lipschitz continuous,
for all $t\in [0,T]$,
\begin{equation}\label{encun}
\E\Big[\sup_{s\in [0,t]}|X^{1,N}_s-\bX^1_s|^2\Big]\leq C_T \int_0^t\E[|X^{1,N}_s-\bX^1_s|^2] \dd s
+ C_T\int_0^t I_s \dd s,
\end{equation}
where, setting  $\mu=\mathrm{Law}(\vmark_1,\bX^1)$,
$$
I_t= \E\Big[\Big|\frac1{N-1}\sum_{j=2}^N \phi(U^{1j,N}_t)\gamma(X^{1,N}_t,X^{j,N}_t)
- \Lambda_\mu(t,\vmark_1,\bX^1)\Big|^2\Big].
$$
We write $I_t\leq 2J_t+2K_t$, where $J_t$ is the stability error and $K_t$ is
the empirical fluctuation error:
\begin{align*}
J_t=& \E\Big[\Big|\frac1{N-1}\sum_{j=2}^N \Big(\phi(U^{1j,N}_t)\gamma(X^{1,N}_t,X^{j,N}_t)
-\phi(\bU^{1j}_t)\gamma(\bX^1_t,\bX^j_t)\Big)\Big|^2\Big],\\
K_t=& \E\Big[\Big|\frac1{N-1}\sum_{j=2}^N\phi(\bU^{1j}_t)\gamma(\bX^1_t,\bX^j_t)
- \Lambda_\mu(t,\vmark_1,\bX^1)\Big|^2\Big].
\end{align*}
By the Cauchy--Schwarz inequality and since $\phi$ and $\gamma$ are bounded and Lipschitz continuous,
\begin{align*}
J_t\leq& \frac1{N-1}\sum_{j=2}^N\E\Big[\Big|\phi(U^{1j,N}_t)\gamma(X^{1,N}_t,X^{j,N}_t)
-\phi(\bU^{1j}_t)\gamma(\bX^1_t,\bX^j_t)\Big|^2\Big] \\
\leq&  \frac C{N-1}\sum_{j=2}^N\E\Big[ |U^{1j,N}_t-\bU^{1j}_t|^2+ |X^{1,N}_t-\bX^1_t|^2+|X^{j,N}_t-\bX^j_t|^2\Big]\\
\leq & \frac {C_T}{N-1}\sum_{j=2}^N\E\Big[\sup_{s\in [0,t]}(|X^{1,N}_s-\bX^{1}_s|^2+|X^{j,N}_s-{\bX^j_s}|^2)\Big]\\
\leq &C_T \E\Big[\sup_{s\in [0,t]} |X^{1,N}_s-\bX^{1}_s|^2\Big]
\end{align*}
by~\eqref{ttt1} and exchangeability.
Since $\bU^{1j}_t=\Gamma_t(\vmark_1,\bX^1,\vmark_j,\bX^j,\emark_{1j},W^{1j})$ by Lemma~\ref{mes},
$$
K_t=\E\Big[\Big|\frac1{N-1}\sum_{j=2}^N \phi(\Gamma_t(\vmark_1,\bX^1,\vmark_j,\bX^j,\emark_{1j},W^{1j}))\gamma(\bX^1_t,\bX^j_t)
- \Lambda_\mu(t,\vmark_1,\bX^1)\Big|^2\Big].
$$
By independence of $(\vmark_1,\bX^1)$ and
$(\vmark_j,\bX^j,\emark_{1j},W^{1j})_{j\geq 2}$, see~\eqref{ind}, 
The family 
$$
\big(\phi(\Gamma_t(\vmark_1,\bX^1,\vmark_j,\bX^j,\emark_{1j},W^{1j}))
\gamma(\bX^1_t,\bX^j_t)\big)_{j\geq2}
$$ 
is i.i.d. conditionally on $(\vmark_1,\bX^1)$. Moreover, by definition of $\Lambda_\mu$,
$$
\E_1\Big[
\phi(\Gamma_t(\vmark_1,\bX^1,\vmark_2,\bX^2,\emark_{12},W^{12}))
\gamma(\bX^1_t,\bX^2_t)\Big]=\Lambda_\mu(t,\vmark_1,\bX^1),
$$
where $\E_1= \E[\cdot | (\vmark_1,\bX^1)]$. Hence, abusively writing $\mathrm{Var}_{1} Z=
\E_{1}[|Z-\E_{1}[Z]|^2]$ for $Z$ valued in $\rr^d$,
$$
K_t= \E\Big[\frac 1{N-1} \mathrm{Var}_{1} \Big(
\phi(\Gamma_t(\vmark_1,\bX^1,\vmark_2,\bX^2,\emark_{12},W^{12}))
\gamma(\bX^1_t,\bX^2_t)\Big)\Big]
\leq \frac{||\phi||_\infty^2||\gamma||_\infty^2}{N-1}\leq \frac C N.
$$
We used that $N\geq 2$ in the last inequality.

\vip
All in all, we have proved that for all $t \in [0,T]$,
$$
I_t\leq 2J_t+2K_t\leq C_T \E\Big[\sup_{s\in [0,t]}|X^{1,N}_s-\bX^1_s|^2\Big] + \frac C N .
$$
Recalling~\eqref{encun}, we get, for all $t\in [0,T]$,
$$
\E\Big[\sup_{s\in [0,t]}|X^{1,N}_s-\bX^1_s|^2\Big]\leq C_T \int_0^t\E\Big[\sup_{u \in [0,s]}|X^{1,N}_u-\bX^1_u|^2\Big] \dd s
+ \frac {C_T} N.
$$
Using finally the Gronwall lemma, we find as desired that
$$
\E\Big[\sup_{t\in[0,T]}|X^{1,N}_t-\bX^1_t|^2\Big] \leq \frac{C_T}N, \quad \text{whence also}\quad
\E\Big[\sup_{t\in[0,T]}|U^{12,N}_t-\bU^{12}_t|^2\Big] \leq \frac{C_T}N
$$
by~\eqref{ttt1}.

\vip

We now prove (ii). We adopt the same notation as above and observe that we still have~\eqref{ind}.
We fix $T>0$ and divide the proof into several steps.
\vip
{\it Step 1 : Gaussian moment for the edge.} 
There are $\theta_T>0$ and $C_T>0$ such that
$$
\E\Big[\exp\Big(2 \theta_T \sup_{t\in [0,T]} (\bU^{12}_t)^2\Big)\Big] \leq C_T.
$$
Recall~\eqref{dfg2}: setting $G_{12}=G(\vmark_1,\vmark_2,\emark_{12})$ and
$R_t=\int_0^t \beta(\bX^1_s,\bX^2_s,\bU^{12}_s)\dd W_s^{12}$, 
$$
\sup_{t\in[0,T]}|\bU^{12}_t|\leq |G_{12}|+T||\alpha||_\infty+R_T^*,
\quad \hbox{where}\quad R_T^*=\sup_{t\in[0,T]}|R_t|.
$$
By~\eqref{meu}, $G_{12}$ has a Gaussian moment.
By the Dubins-Schwarz theorem, see e.g. Revuz-Yor~\cite[Theorem~1.6 p~181]{ry},
we can find a Brownian motion $M$ such that a.s., for all $t\geq 0$, $R_t=M_{A_t}$,
with $A_t=\int_0^t \beta^2(\bX^1_s,\bX^2_s,\bU^{12}_s)\dd s$. Thus $R_T^*\leq \sup_{t\in[0,||\beta||_\infty^2 T]} |M_t|$.
Hence $R_T^*$ has a Gaussian moment. Choosing $\theta_T>0$ small
enough and using $(x+y+z)^2\leq 3(x^2+y^2+z^2)$ together with the Cauchy--Schwarz inequality, we get the conclusion.

\vip

{\it Step 2: Empirical fluctuation.} Set
$\mu=\mathrm{Law}(\vmark_1,\bX^1)$. There is $C_T>0$ such that for all
$t\in [0,T]$,
$$
\Delta_t:=\E\Big[\Big|\frac1{N-1}\sum_{j=2}^N \phi(\bU^{1j}_t)\gamma(\bX^1_t,\bX^j_t)
-\Lambda_\mu(t,\vmark_1,\bX^1)\Big|\Big] \leq \frac{C_T}{N^{\frac 1 2}}.
$$
As in the proof of (i) (study of $K_t$)
we have $\Delta_t=\E[E_t]$, where, setting $\E_1=\E[\cdot|(\xi_1,\bX^1)]$,
$$
E_t=\E_{1}\Big[\Big|\frac1{N-1}\sum_{j=2}^N
\phi(\Gamma_t(\vmark_1,\bX^1,\vmark_j,\bX^j,\emark_{1j},W^{1j}))
\gamma(\bX^1_t,\bX^j_t)-\Lambda_\mu(t,\vmark_1,\bX^1)\Big|\Big].
$$
We have
\begin{align*}
E_t\leq &\E_{1}\Big[\Big|\frac1{N-1}\sum_{j=2}^N
\phi(\Gamma_t(\vmark_1,\bX^1,\vmark_j,\bX^j,\emark_{1j},W^{1j}))
\gamma(\bX^1_t,\bX^j_t)-\Lambda_\mu(t,\vmark_1,\bX^1)\Big|^2\Big]^{\frac12}\\
\leq & \Big(\frac 1{N-1} \mathrm{Var}_{1} \;\Big(
\phi(\Gamma_t(\vmark_1,\bX^1,\vmark_2,\bX^2,\emark_{12},W^{12}))
\gamma(\bX^1_t,\bX^2_t)\Big)
\Big)^{\frac12}
\end{align*}
as in the proof of (i). Thus, recalling that $\Gamma_t(\vmark_1,\bX^1,\vmark_2,\bX^2,\emark_{12},W^{12})=\bU^{12}_t$, for all $N\geq 2$,
$$
\Delta_t\leq \sqrt{\frac2{N}} \E[(\phi(\bU^{12}_t)\gamma(\bX^1_t,\bX^2_t))^2 ]
\leq \frac C{N^{\frac12}} \E[1+(\bU^{12}_t)^2] \leq \frac {C_T}{N^{\frac12}}.
$$
We used that $\phi$ has at most linear growth, that $\gamma$ is bounded, and
that $\sup_{[0,T]}\E[(\bU^{12}_t)^2]<\infty$.

\vip
{\it Step 3 : Notation.}
 For $i,j\in I_N$ with $i\neq j$ and $0\leq s\leq t$, set
$$
Y^{i,N}_{s,t}=\sup_{u\in [s,t]} |X^{i,N}_u-\bX^i_u|
\quad \text{and} \quad
Z^{ij,N}_{s,t}=\sup_{u\in [s,t]} |U^{ij,N}_u-\bU^{ij}_u|.
$$

{\it Step 4. Edge stability.}
In this step we establish a short-time estimate for the edge error $Z^{12,N}$.
Fix $0\leq S\leq S'\leq T$ and $t\in[S,S']$. Recalling~\eqref{s2} and~\eqref{dfg2}, we see that
$Z^{12,N}_{S,t} \leq Z^{12,N}_{S,S}+ I^1_{S,t}+I^2_{S,t}$, where
\begin{align*}
I^1_{S,t}=&\int_S^t |\alpha(X^{1,N}_s,X^{2,N}_s,U^{12,N}_s)-
\alpha(\bX^{1}_s,\bX^{2}_s,\bU^{12}_s)| \dd s,\\
I^2_{S,t}=&\sup_{r\in [S,t]} \Big|\int_S^r (\beta(X^{1,N}_s,X^{2,N}_s,U^{12,N}_s)-
\beta(\bX^{1}_s,\bX^{2}_s,\bU^{12}_s))\dd W^{12}_s \Big|.
\end{align*}
Using that $\alpha$ is Lipschitz continuous, we find
$$
\E[I^1_{S,t}]\leq C(t-S) \E[Y^{1,N}_{S,t}+ Y^{2,N}_{S,t}+Z^{12,N}_{S,t}] \leq C(S'-S)\E[Y^{1,N}_{S,t}+Z^{12,N}_{S,t}]
$$
by exchangeability. Using the Burkholder--Davis--Gundy inequality and that $\beta$ is Lipschitz continuous,
\begin{align*}
\E[I^2_{S,t}]\leq C \E\Big[\Big(\int_S^t (\beta(X^{1,N}_s,X^{2,N}_s,U^{12,N}_s)-
\beta(\bX^{1}_s,\bX^{2}_s,\bU^{12}_s))^2 \dd s \Big)^{\frac 12}\Big]
\leq  C (S'-S)^{\frac12}\E[Y^{1,N}_{S,t}+Z^{12,N}_{S,t}]
\end{align*}
by exchangeability again. Combining the bounds on $I^1_{S,t}$ and $I^2_{S,t}$, we get,
for all $0\leq S \leq t \leq S' \leq T$,
\begin{align*}
\E[Z^{12,N}_{S,t}] \leq & \E[Z^{12,N}_{S,S}]
+ \kappa_{1} [S'-S+(S'-S)^{\frac12}]\E[Y^{1,N}_{S,t}+Z^{12,N}_{S,t}]
\end{align*}
for some $\kappa_{1}>0$ depending only on $\alpha$ and $\beta$.

\vip
{\it Step 5: Particle stability.} Here we establish a short-time estimate
for $Y^{1,N}$. Fix $0\leq S\leq S'\leq T$ and $t\in[S,S']$.
Recalling~\eqref{s1} and~\eqref{bX}, we have
$Y^{1,N}_{S,t}\leq Y^{1,N}_{S,S}+J^1_{S,t}+J^2_{S,t}+J^3_{S,t}$, where
\begin{gather*}
J^1_{S,t}=\int_S^t |b(X^{1,N}_s)-b(\bX^1_s)|\dd s, \quad
J^2_{S,t}=\sup_{r\in [S,t]} \Big|\int_S^r (\sigma(X^{1,N}_s)-\sigma(\bX^{1}_s))\dd B^1_s\Big|,\\
J^{3}_{S,t}= \int_S^t \Big|\frac1{N-1}\sum_{j=2}^N \phi(U^{1j,N}_s)\gamma(X^{1,N}_s,X^{j,N}_s)
- \Lambda_\mu(s,\vmark_1,\bX^1)\Big| \dd s.
\end{gather*}
The terms $J^1_{S,t}$ and $J^2_{S,t}$ are the drift and martingale contributions
coming from the single-particle coefficients $b$ and $\sigma$, while $J^3_{S,t}$
is the interaction contribution. Exactly as in Step~4, using the Lipschitz
continuity of $b$ and $\sigma$ and the Burkholder--Davis--Gundy inequality, we find that
$$
\E[J^1_{S,t}+J^2_{S,t}] \leq C [S'-S+(S'-S)^{\frac12}] \E[Y^{1,N}_{S,t}+Z^{12,N}_{S,t}].
$$
Recalling Step~2, we write
\begin{align*}
\E[J^3_{S,t}] \leq& \int_S^t \E\Big[\Big|\frac1{N-1}\sum_{j=2}^N \Big(\phi(U^{1j,N}_s)\gamma(X^{1,N}_s,X^{j,N}_s)
- \phi(\bU^{1j}_s)\gamma(\bX^{1}_s,\bX^{j}_s)  \Big)\Big|  \Big]\dd s + \int_S^t \Delta_s \dd s \\
\leq & \int_S^t \E[|\phi(U^{12,N}_s)\gamma(X^{1,N}_s,X^{2,N}_s)
- \phi(\bU^{12}_s)\gamma(\bX^{1}_s,\bX^{2}_s)|] \dd s + C_T N^{-\frac 12}
\end{align*}
by exchangeability and Step~2. We now set 
$H_s^N=(|X^{1,N}_s-\bX^1_s|\wedge 1)+(|X^{2,N}_s-\bX^2_s|\wedge 1)$.
Since $\gamma$ and $\phi$ are Lipschitz continuous, with $\gamma$
bounded, for $t\in [S,S']$,
\begin{align*}
\E[J^3_{S,t}] \leq& C\int_S^t \E[(1+|\bU^{12}_s|)H_s^N
+|U^{12,N}_s-\bU^{12}_s|] \dd s + C_T N^{-\frac 12} \\
\leq & C \int_S^t \E[|\bU^{12}_s|H_s^N] \dd s
+ C(S'\!-\!S)\E[Y^{1,N}_{S,t}+Z^{12,N}_{S,t}]+C_T N^{-\frac 12}.
\end{align*}
Combining these estimates (and using exchangeability),
we get, for all $0\leq S \leq t \leq S' \leq T$,
\begin{align*}
\E[Y^{1,N}_{S,t}] \leq & \E[Y^{1,N}_{S,S}]
+ C_T N^{-\frac12}+ \kappa_{2} [S'-S+(S'-S)^{\frac12}] \E[Y^{1,N}_{S,t}+Z^{12,N}_{S,t}]
+ \kappa_{2} \int_S^t\E[|\bU^{12}_s|H_s^N]\dd s
\end{align*}
for some $\kappa_{2}>0$ depending only on $b$, $\sigma$, $\gamma$ and $\phi$.

\vip
{\it Step 6: Truncation and recursion.} Gathering Steps~4 and~5, we find $\kappa_{3}>0$ such that for all
$0\leq S\leq S'\leq T$, all $t\in [S,S']$,
\begin{align*}
\E[ Y^{1,N}_{S,t}+Z^{12,N}_{S,t}]\leq&  \E[Y^{1,N}_{S,S}+Z^{12,N}_{S,S}]
+ C_T N^{-\frac12}+ \kappa_{3} [S'-S+(S'-S)^{\frac12}] \E[Y^{1,N}_{S,t}+Z^{12,N}_{S,t}]\\
&+ \kappa_{3} \int_S^t\E[|\bU^{12}_s|H_s^N]\dd s.
\end{align*}
Let $\delta>0$ such that $\kappa_3(\delta+\delta^{\frac12})=\frac12$ and
$\kappa_{4}=2\kappa_{3}$. Then for all
$0\leq S \leq S' \leq T \land(S+\delta)$, all $t \in [S,S']$,
\begin{align*}
\E[ Y^{1,N}_{S,t}+Z^{12,N}_{S,t}]\leq&  2\E[Y^{1,N}_{S,S}+Z^{12,N}_{S,S}]
+ C_T N^{-\frac12}+\kappa_4 \int_S^t\E[|\bU^{12}_s|H_s^N]\dd s.
\end{align*}
The only remaining difficulty is the unbounded factor $\bU^{12}$ in
the last integral. We handle it by truncating at a suitable level $A$.
For any $A>0$ and $s\geq S$, using exchangeability and Step~1,
\begin{align*}
\E[|\bU^{12}_s|H_s^N]
&\leq A\E[|X^{1,N}_s-\bX^1_s|+|X^{2,N}_s-\bX^2_s|]
+2\E[|\bU^{12}_s|\indiq_{\{|\bU^{12}_s|>A\}}]\\
&\quad\leq 2A\E[Y^{1,N}_{S,s}] + C_T e^{-\theta_T A^2}.
\end{align*}
In the last line, we used that
$u\indiq_{\{u>A\}}\leq C_T e^{-\theta_T A^2}e^{2\theta_T u^2}$ and the Gaussian moment
from Step~1. Plugging this into the preceding inequality, we get that
for all $0\leq S \leq S' \leq T \land(S+\delta)$, all $t \in [S,S']$ and all $A>0$,
\begin{align*}
\E[Y^{1,N}_{S,t}+Z^{12,N}_{S,t}] \leq& 2\E[Y^{1,N}_{S,S}+Z^{12,N}_{S,S}]+ C_T (N^{-\frac12} +e^{-\theta_T A^2})
+ 2A \kappa_4 \int_S^t\E[Y^{1,N}_{S,s}+Z^{12,N}_{S,s}]\dd s.
\end{align*}
Using the Gronwall lemma, we find that for all $0\leq S \leq S' \leq T \land(S+\delta)$, for all $A>0$,
\begin{align*}
\E[Y^{1,N}_{S,S'}+Z^{12,N}_{S,S'}] \leq& \Big(2\E[Y^{1,N}_{S,S}+Z^{12,N}_{S,S}]+ C_T (N^{-\frac12} +e^{-\theta_T A^2})
\Big)e^{2\kappa_4 A \delta}.
\end{align*}
Choosing $A=(\frac{\log N}{2\theta_T})^{\frac12}$,
this gives, for all $0\leq S \leq S' \leq T \land(S+\delta)$,
\begin{equation}\label{thet}
\begin{aligned}
\E[Y^{1,N}_{S,S'}+Z^{12,N}_{S,S'}]
&\leq \Big(2\E[Y^{1,N}_{S,S}+Z^{12,N}_{S,S}]+ C_TN^{-\frac12}\Big) K_{N,T},\\
\text{where} \quad K_{N,T}
&=\exp\Big(2\kappa_4 \delta \Big(\frac{\log N}{2\theta_T}\Big)^{\frac12}\Big).
\end{aligned}
\end{equation}

{\it Step 7: Conclusion.} Set
$k=\lceil T/\delta\rceil$ and $t_\ell=\ell\delta\land T$, for $\ell=0,\dots,k$.
Set $u_{N,0}=0$ and, for $\ell=0,\dots,k-1$,
$$
u_{N,\ell+1}=\E[Y^{1,N}_{t_\ell,t_{\ell+1}}+Z^{12,N}_{t_\ell,t_{\ell+1}}].
$$
By~\eqref{thet}, for all $\ell=0,\dots,k-1$,
$$
u_{N,\ell+1} \leq (2u_{N,\ell}+C_T N^{-\frac12})K_{N,T}.
$$
Indeed, the error at the left endpoint of the $\ell$-th block is bounded by
$u_{N,\ell}$. We classically conclude that, modifying the value of $C_T$,
$$
u_{N,\ell} \leq C_T K_{N,T}^{k}N^{-\frac12}
$$
for all $\ell=1,\dots,k$.
All in all, modifying again the value of $C_T$,
$$
\E\Big[\sup_{s\in [0,T]} (|X^{1,N}_s\!-\!\bX^1_s|\!+\!|U^{12,N}_s\!-\!\bU^{12}_s|)\Big]\leq\sum_{\ell=1}^{k}u_{N,\ell}
\leq C_T K_{N,T}^{k} N^{-\frac12}.
$$
Since
$$
K_{N,T}^{k}=\exp\Big(2\kappa_4 k \delta \Big(\frac{\log N}{2\theta_T}\Big)^{\frac12}\Big),
$$
the conclusion of (ii) follows.

\vip

We finally prove (iii). Under the standing alternative in the statement,
either the assumption of Theorem~\ref{mr2}-(i) or that of Theorem~\ref{mr2}-(ii)
holds.
We let $P_N:=\{(i,j) \in I_N,i\neq j\}$. As already seen, $\bX^i$ is $\sigma(\vmark_i,B^i)$-measurable and $\bU^{ij}$ is $\sigma(\vmark_i,B^i,\vmark_j,B^j,\emark_{ij},W^{ij})$-measurable. Using Theorem~\ref{mr} and Assumption~\ref{asr},
we see that
all the random variables $\mathcal Z^{ij}_t=(\bX^{i}_t,\bX^{j}_t,\bU^{ij}_t)$ are $f_t$-distributed
and that for $C\subset P_N$, the family $(\mathcal Z^{ij}_t, (i,j)\in C)$ is i.i.d. as soon as
for all $(i,j),(i',j') \in C$ distinct, the four numbers
$i,j,i',j'$ are distinct. We set
$$
\mu^N_t=\frac1{N(N-1)}\sum_{(i,j) \in P_N}\delta_{(X^{i,N}_t,X^{j,N}_t,U^{ij,N}_t)}\quad \hbox{and}\quad
\bar \mu^N_t=\frac1{N(N-1)}\sum_{(i,j) \in P_N}\delta_{(\bX^{i}_t,\bX^{j}_t,\bU^{ij}_t)}.
$$
We write $\cW_1(\mu^N_t,f_t) \leq \cW_1(\mu^N_t,\bar \mu^N_t)+\cW_1(\bar \mu^N_t,f_t)$.
We classically have
$$
\E[ \cW_1(\mu^N_t,\bar \mu^N_t)]\leq \frac1{N(N-1)}\sum_{(i,j) \in P_N}\E[|X^{i,N}_t-\bX^i_t|
+|X^{j,N}_t-\bX^j_t|+|U^{ij,N}_t-\bU^{ij}_t|] \leq \frac{C_T}{N^{\frac 1{2d+1}}}
$$
by Theorem~\ref{mr2}-(i) (since $N^{-\frac12}\leq N^{-\frac 1{2d+1}}$) or Theorem~\ref{mr2}-(ii)
(since $N^{-\frac12}\exp(C_T\sqrt{\log N})\leq C_T N^{-\frac 1{2d+1}}$), using
Cauchy--Schwarz in the first case to pass from the $L^2$ estimate to $L^1$.
\vip

We next write $P_N= \bigsqcup_{k=0}^{2N-1} P_N^k$,
where, for $k=0,\dots, N-1$,
$$
P_N^k=\{(i,j) \in I_N : i+j=k \!\!\mod N, i<j\}\;\; \text{and} \;\;
P_N^{N+k}=\{(i,j) \in I_N : i+j=k \!\!\mod N, i>j\}.
$$
Easy considerations show that for all $k=0,\dots,2N-1$, all $(i,j),(i',j') \in P_N^k$ with $(i,j)\neq(i',j')$,
the four numbers $i,i',j,j'$ are distinct, so that the family $(\mathcal Z^{ij}_t, (i,j)\in P_N^k)$ is i.i.d.
Moreover,
$$
\#(P_N^k)=\begin{cases}
\frac{N-1}2 & \text{if $N$ is odd},\\
\frac N2& \text{if $N$ is even and $k$ is odd},\\
\frac N2-1& \text{if $N$ is even and $k$ is even}.
\end{cases}
$$
Assuming now that $N\geq 3$ is odd, we have
$$
\bar \mu^N_t=\frac1{2N} \sum_{k=0}^{2N-1} \bar \mu^{N,k}_t,\quad \text{where}\quad
\bar \mu^{N,k}_t=\frac 2{N-1}\sum_{(i,j) \in P_N^k}\delta_{(\bX^{i}_t,\bX^{j}_t,\bU^{ij}_t)}.
$$
Using the joint convexity of $\cW_1$ in its measure arguments, we have
$$
\cW_1(\bar \mu^N_t,f_t)\leq \frac1{2N} \sum_{k=0}^{2N-1}\cW_1(\bar \mu^{N,k}_t,f_t).
$$
For each $k=0,\dots,2N-1$, $\bar \mu^{N,k}_t$ is the empirical measure of $\frac{N-1}2$ i.i.d. random variables
with law $f_t\in \cP(\rr^d\times\rr^d\times\rr)$. Using the defining equations,
Assumption~\ref{asm}, the standing alternative in the theorem, and the boundedness
of the coefficients, one easily checks that for all $T>0$,
$$
\sup_{t\in [0,T]}\E[|\bX^1_t|^2+|\bX^2_t|^2+|\bU^{12}_t|^2]<\infty.
$$
By~\cite[Theorem 1]{fg} with $(p,d,q)$ replaced by $(1,2d+1,2)$, for all $t\in [0,T]$, all $k=0,\dots,2N-1$,
$$
\E[\cW_1(\bar \mu^{N,k}_t,f_t)] \leq C_T N^{-\frac 1{2d+1}}.
$$
All this shows that if $N\geq 3$ is odd,
$$
\E[\cW_1(\bar \mu^N_t,f_t)]\leq C_T N^{-\frac 1{2d+1}},\quad \text{whence}
\quad \E[\cW_1(\mu^N_t,f_t)]\leq C_T N^{-\frac 1{2d+1}}.
$$
The case where $N\geq 4$ is even is treated similarly, with light complications.
Taking the supremum over $t\in[0,T]$ gives the claim.
\end{proof}

\section{Measurability}\label{smes}

We first prove Lemma~\ref{mes}, using arguments found in Karandikar~\cite{k}.

\begin{proof}[Proof of Lemma~\ref{mes}]
We set $\cX=\vmarkspace\times \cC_d\times \vmarkspace\times \cC_d\times\emarkspace
\times \cC_1$.
\vip
{\it Step 1: Euler scheme.}
Fix $n\geq 1$.
For $(\pmark,x,\qmark,y,\rmark,w)\in\cX$, we denote by
$(\Gamma^n_t(\pmark,x,\qmark,y,\rmark,w))_{t\geq 0}$ the Euler scheme with step $2^{-n}$ for the
equation $u_t=G(\pmark,\qmark,\rmark)+\int_0^t \alpha(x_s,y_s,u_s) \dd s + \int_0^t \beta(x_s,y_s,u_s) \dd w_s$:
we set $q_0=G(\pmark,\qmark,\rmark)$ and, for $k\geq 0$,
\begin{align*}
q_{(k+1)2^{-n}}=q_{k2^{-n}}+ 2^{-n}\alpha(x_{k2^{-n}},y_{k2^{-n}},q_{k2^{-n}})+
(w_{(k+1)2^{-n}}-w_{k2^{-n}})\beta(x_{k2^{-n}},y_{k2^{-n}},q_{k2^{-n}}).
\end{align*}
Finally, for all $t\geq 0$, choose $k=\lfloor 2^nt \rfloor$ and set
$$
\Gamma^n_t(\pmark,x,\qmark,y,\rmark,w)= q_{k2^{-n}} + \Big(t-k2^{-n}\Big)\alpha(x_{k2^{-n}},y_{k2^{-n}},q_{k2^{-n}})+
(w_{t}-w_{k2^{-n}})\beta(x_{k2^{-n}},y_{k2^{-n}},q_{k2^{-n}}).
$$
The map $\Gamma^n : \cX\to \cC_1$ is measurable, because it is a continuous function of $(G(\pmark,\qmark,\rmark),x,y,w)$.
By construction, it holds that for all $(\pmark,x,\qmark,y,\rmark,w)\in \cX$, all $t\geq 0$,
\begin{equation}\label{prog2}
\Gamma^n_t(\pmark,x,\qmark,y,\rmark,w)=\Gamma^n_t(\pmark,(x_{s\land t})_{s\geq 0},\qmark,y,\rmark,w).
\end{equation}

{\it Step 2: Convergence to the edge SDE.}
Here we prove that for any space $(\Omega,\cF,(\cF_t)_{t\ge 0},\PP)$, any independent 
$\cF_0$-measurable triple $(\vmark,\nmark,\emark)$ with $\vmark,\nmark\sim \vmarklaw$ and $\emark\sim \emarklaw$,
any pair of continuous $(\cF_t)_{t\geq 0}$-adapted processes $X=(X_t)_{t\geq 0}$ and $Y=(Y_t)_{t\geq 0}$ valued in $\rr^d$
both satisfying~\eqref{condA},
any $1$-dimensional $(\cF_t)_{t\geq 0}$-Brownian motion
$W=(W_t)_{t\geq 0}$, the pathwise unique solution $U=(U_t)_{t\geq 0}$ to~\eqref{dfg}
satisfies
\begin{equation}\label{cps}
\text{for all $T>0$,}\quad \lim_n \sup_{t\in [0,T]} |U_t-\Gamma^n_t(\vmark,X,\nmark,Y,\emark,W)|^2=0 \quad \text{a.s.}
\end{equation}

Since $\alpha$ and $\beta$ are bounded, there is a constant $C>0$ such that
for all $0\leq s<t<s+1$,
\begin{equation}\label{condAU}
\E[|U_t-U_s|^2] \leq C (t-s).
\end{equation}
We next set $U^n_t=\Gamma^n_t(\vmark,X,\nmark,Y,\emark,W)$, 
which classically satisfies, introducing $\rho_n(t)=2^{-n}\lfloor 2^nt\rfloor$,
$$
U_t^n=G(\vmark,\nmark,\emark)+ \int_0^t \alpha(X_{\rho_n(s)},Y_{\rho_n(s)},U^n_{\rho_n(s)})\dd s
+ \int_0^t \beta(X_{\rho_n(s)},Y_{\rho_n(s)},U^n_{\rho_n(s)})\dd W_s.
$$
Since $\alpha$ and $\beta$ are bounded and Lipschitz continuous, we have, for all $t\in [0,T]$,
\begin{align*}
\E\Big[\sup_{s\in [0,t]}|U_s-U^n_s|^2\Big]\leq& C_T \int_0^t \E\Big[|X_s-X_{\rho_n(s)}|^2
+|Y_s-Y_{\rho_n(s)}|^2+ |U_s-U^n_{\rho_n(s)}|^2
\Big] \dd s \\
\leq & C_T \e_{T,n} + C_T \int_0^t \E\Big[\sup_{u\in [0,s]}|U_u-U^n_u|^2\Big] \dd s,
\end{align*}
where
$$
\e_{T,n}:=\int_0^T \E\Big[|X_s-X_{\rho_n(s)}|^2
+|Y_s-Y_{\rho_n(s)}|^2+ |U_s-U_{\rho_n(s)}|^2
\Big] \dd s \leq C_T 2^{-n}
$$
by~\eqref{condAU}, since $X$ and $Y$ satisfy~\eqref{condA} and since
$0\leq s-\rho_n(s)\leq 2^{-n}$ for all $s\geq 0$. By Gronwall's lemma,
$$
\E\Big[\sup_{s\in [0,T]}|U_s-U^n_s|^2\Big]\leq C_T 2^{-n}.
$$
Thus the series $\sum_{n\geq 1}\sup_{s\in [0,T]}|U_s-U^n_s|^2$ a.s. converges, whence $\lim_n \sup_{s\in [0,T]}|U_s-U^n_s|^2=0$ a.s.
Taking the intersection of the corresponding probability-one events over integer
$T\geq 1$, we also have $U^n\to U$ in $(\cC_1,\delta)$ a.s.
where $\delta(z,z')=\sum_{\ell\geq 1}2^{-\ell}[1\land \sup_{t\in[0,\ell]} |z(t)-z'(t)|]$ classically metrizes the
topology of uniform convergence on compact time intervals.

\vip

{\it Step 3: Measurability of the limit map.}
Since $(\cC_1,\delta)$ is complete, the set
$$
\cA=\{(\pmark,x,\qmark,y,\rmark,w)\in\cX :
\lim_n \Gamma^n(\pmark,x,\qmark,y,\rmark,w) \text{ exists in }(\cC_1,\delta)\}
$$
is measurable.
We then set
$$
\Gamma(\pmark,x,\qmark,y,\rmark,w)= \lim_n \indiq_{\{(\pmark,x,\qmark,y,\rmark,w)\in \cA\}}\Gamma^n(\pmark,x,\qmark,y,\rmark,w)
$$
for all $(\pmark,x,\qmark,y,\rmark,w)\in \cX$. This map $\Gamma : \cX\to \cC_1$ is measurable.

\vip
{\it Step 4: Identification with the SDE.}
Consider a space $(\Omega,\cF,(\cF_t)_{t\ge 0},\PP)$, a pair of independent
$\vmarklaw$-distributed $\cF_0$-measurable random variables $\vmark,\nmark$ valued in
$\vmarkspace$, an $\cF_0$-measurable $\emarkspace$-valued
random variable $\emark$ independent of $(\vmark,\nmark)$ and with law
$\emarklaw$, a pair of continuous $(\cF_t)_{t\geq 0}$-adapted processes
$X=(X_t)_{t\geq 0}$ and $Y=(Y_t)_{t\geq 0}$ valued in $\rr^d$ both satisfying~\eqref{condA},
a $1$-dimensional $(\cF_t)_{t\geq 0}$-Brownian motion
$W=(W_t)_{t\geq 0}$, and the pathwise unique solution $U=(U_t)_{t\geq 0}$ to~\eqref{dfg}.
Consider the event $A=\{(\vmark,X,\nmark,Y,\emark,W) \in \cA\}$, which has probability $1$ by Step~2.
On $A$, we have $U=\Gamma(\vmark,X,\nmark,Y,\emark,W)$ (in the sense that for all $t\geq 0$, $U_t=\Gamma_t(\vmark,X,\nmark,Y,\emark,W)$)  again by Step~2
and by definition of $\Gamma$.

\vip

{\it Step 5: Non-anticipativity.}
With the same notation as in Step~4, on $A$, we have that for all $t\geq 0$,
$$
\begin{aligned}
\Gamma_t(\vmark,X,\nmark,Y,\emark,W)
&=\lim_n\Gamma_t^n(\vmark,X,\nmark,Y,\emark,W)\\
&=\lim_n \Gamma_t^n(\vmark,(X_{s\land t})_{s\geq 0},\nmark,Y,\emark,W)\\
&=\Gamma_t(\vmark,(X_{s\land t})_{s\geq 0},\nmark,Y,\emark,W)
\end{aligned}
$$
by~\eqref{prog2}. The last equality is legitimate because 
$(X_{s\land t})_{s\geq 0}$ satisfies the conditions of Steps~2 and~3.
This implies~\eqref{prog}.
\end{proof}

We now turn to Lemma~\ref{Lambda}, using the non-anticipativity of $\Gamma$
from Lemma~\ref{mes} and the domination estimate from
Proposition~\ref{estgamma}.

\begin{proof}[Proof of Lemma~\ref{Lambda}.]
Fix an admissible probability measure $\mu$ and an admissible pair $(\vmark,X)$
defined on $(\Omega,\cF,(\cF_t)_{t\geq0},\PP)$.
Let $(\Omega^*,\cF^*,(\cF_t^*)_{t\geq0},\PP^*)$ carry an admissible pair
$(\nmark,Y)$ with law $\mu$, let
$(\Omega^\dagger,\cF^\dagger,\PP^\dagger)$ carry an
$\emarkspace$-valued random variable $\emark$ with
law $\emarklaw$, and let
$(\Omega^\#,\cF^\#,(\cF_t^\#)_{t\geq0},\PP^\#)$ carry a one-dimensional Brownian
motion $W$. These auxiliary spaces are taken independent of $(\Omega,\cF,(\cF_t)_{t\geq0},\PP)$ and of each other. As in the proof of Lemma~\ref{estlambda}, we have
$$
\Lambda_{\mu}(t,\xi,X)=\E^*\Big[\E^\dagger\Big[\E^\#\Big[
\phi(\Gamma_t(\vmark,X,\nmark,Y,\emark,W)) \gamma(X_t,Y_t)
\Big]\Big]\Big].
$$
For all $T>0$, there is a constant $C_T$ such that a.s.,
\begin{equation}\label{le}
\Psi_{\mu,T}(\vmark,X):=
\E^*\Big[\E^\dagger\Big[\E^\#\Big[
\sup_{t\in [0,T]} |\phi(\Gamma_t(\vmark,X,\nmark,Y,\emark,W))|
\Big]\Big]\Big] \leq C_T.
\end{equation}
If $\phi$ is bounded, this is obvious. Else, this follows from 
Proposition~\ref{estgamma}-(i), the fact that $\phi$ has at most linear growth and~\eqref{m2c}.
\vip
Point (i), i.e. continuity of
$(\Lambda_\mu(t,\vmark,X))_{t\geq 0}$, follows from dominated convergence by~\eqref{le},  the
continuity and boundedness of $\gamma$, the continuity of $\phi$ and
the facts that $\Gamma$ is valued in $\cC_1$ while $X$ and $Y$ are valued in $\cC_d$.
\vip
For (ii), i.e. $(\cF_t)_{t\geq 0}$-adaptedness of $(\Lambda_\mu(t,\vmark,X))_{t\geq 0}$, it suffices to use~\eqref{prog}: we have
$$
\Lambda_{\mu}(t,\xi,X)=\E^*\Big[\E^\dagger\Big[\E^\#\Big[
\phi(\Gamma_t(\vmark,(X_{s\land t})_{s\geq 0},\nmark,Y,\emark,W)) \gamma(X_t,Y_t)
\Big]\Big]\Big].
$$
which is of course $\cF_t$-measurable.
\end{proof}

\bibliographystyle{plain}
\bibliography{pcp}
\end{document}